\newtheorem{theorem}{Theorem}[section]
\newtheorem{proposition}[theorem]{Proposition}
\newtheorem{lemma}[theorem]{Lemma}
\newtheorem{corollary}[theorem]{Corollary}
\newtheorem*{MainTheorem1}{Theorem~\ref{thm:pasmverts}}
\newtheorem*{MainTheorem2}{Theorem~\ref{pasmfacets}}
\newtheorem*{MainTheorem3}{Theorem~\ref{thm:permuto_ineq}}
\newtheorem*{MainTheorem4}{Theorem~\ref{permutofacets}}
\newtheorem*{MainTheorem5}{Theorem~\ref{chain-face-lattice}}
\newtheorem*{MainTheorem6}{Theorem~\ref{pasm-proj}}
\newtheorem*{PasmFaceLatticeThm}{Theorem~\ref{thm:pasm_facelattice}}
\newtheorem*{Volthm}{Theorem~\ref{ppermvol1}}
\newtheorem{conjecture}[theorem]{Conjecture}
\newtheorem*{Faceconj}{Conjecture~\ref{mn-chain-face}}
\newtheorem*{Volconj2}{Conjecture~\ref{ppermvol2}}
\theoremstyle{definition}
\newtheorem{definition}[theorem]{Definition}
\newtheorem{example}[theorem]{Example}
\theoremstyle{remark}
\newtheorem{remark}[theorem]{Remark}
\numberwithin{equation}{section}
\newcommand{\R}{\mathcal{R}}
\newcommand{\Q}{\mathcal{Q}}
\newcommand{\eR}{\widetilde{\mathcal{R}}}
\newcommand{\pasm}{\mathrm{PASM}}
\newcommand{\pperm}{\mathrm{PPerm}}
\author{Dylan Heuer and Jessica Striker}
\email{heuerd@msoe.edu, jessica.striker@ndsu.edu}
\address{Milwaukee School of Engineering, North Dakota State University}
\title{Partial Permutation and Alternating Sign Matrix Polytopes}
\keywords{polytope; partial permutation; sign matrix; alternating sign matrix; Birkhoff polytope; permutohedron}
\subjclass[2010]{05A05, 52B05}
\begin{document}
\begin{abstract}
We define and study a new family of polytopes which are formed as convex hulls of partial alternating sign matrices. We determine the inequality descriptions, number of facets, and face lattices of these polytopes. We also study partial permutohedra that we show arise naturally as projections of these polytopes. We enumerate facets and also characterize the face lattices of partial permutohedra in terms of chains in the Boolean lattice. Finally, we have a result and a conjecture on the volume of partial permutohedra when one parameter is fixed to be two.
\end{abstract}

\maketitle
\tableofcontents

\section{Introduction}
Many examples of polytopes are either \emph{simple} (every vertex is contained in the minimal number of facets), such as the $n$-cube, or \emph{simplicial} (every proper face is a simplex), such as the tetrahedron.
An interesting example of a non-simple and non-simplicial polytope is the $n$th Birkhoff polytope (for $n > 3$), defined as the convex hull of $n\times n$ permutation matrices~\cite{birkhoff, vonneumann}. This polytope has $n^2$ facets, its vertices are exactly the $n\times n$ permutation matrices, and it is easily described via inequalities as the set of all doubly stochastic matrices \cite{birkhoff, vonneumann}. It also has a characterization of its face lattice in terms of elementary bipartite graphs \cite{BrualdiGibson1976,BilleraSarangarajan1996}. Another non-simple and non-simplicial example is the $n$th alternating sign matrix polytope (for $n\geq 3$), defined as the convex hull of $n\times n$ alternating sign matrices. This polytope has $4[(n-2)^2+1]$ facets, and its vertices are exactly the $n \times n$ alternating sign matrices, and it has a nice inequality description \cite{behrend, striker}. Its face lattice can be characterized in terms of elementary flow grids \cite{striker}. Alternating sign matrices are interesting mathematical objects in their own right. The poset of alternating sign matrices is the MacNeille completion of the Bruhat order on permutation matrices \cite{TREILLIS}, and alternating sign matrices are in bijection with many interesting objects (see, for example \cite{ProppManyFaces}). 

In this paper, we define and study a more general class of polytopes, denoted $\pasm(m,n)$, composed as convex hulls of $m\times n$ \emph{partial alternating sign matrices} (see Definition~\ref{pasm}). These matrices are also of independent interest; there are results about posets \cite{Fortin} and bijections \cite{Heuer} analogous to those in the non-partial case. This paper continues the study of analogous results in the realm of polytopes and reveals new connections to graph associahedra. We use machinery developed in this study of sign matrix polytopes~\cite{SolhjemStriker} to determine the inequality descriptions of these polytopes, as well as facet enumerations and a description of their face lattices. Finally, we investigate the \emph{partial permutohedron} $\mathcal{P}(m,n)$ (see Definition~\ref{def:partialpermutohedron}), and show it is a projection of the polytopes from the first part of the paper.

Below we state our main results.
Our first set of main results involves the partial alternating sign matrix polytope $\pasm(m,n)$, while our second concerns the partial permutohedron $\mathcal{P}(m,n)$. For each set, we find an inequality description, enumerate the facets, and characterize the face lattice.

\begin{MainTheorem1}
The polytope $\pasm(m,n)$ consists of all $m\times n$ real matrices $X = (X_{ij})$ such that:
\begin{align*}
0 \leq \displaystyle\sum_{i'=1}^{i} X_{i'j} & \leq 1, & \mbox{ for all } & 1 \leq i \leq m, 1 \leq j \leq n, \\
0 \leq \displaystyle\sum_{j'=1}^{j} X_{ij'} & \leq 1, & \mbox{ for all } & 1 \leq i \leq m, 1 \leq j \leq n.
\end{align*}
\end{MainTheorem1}

\begin{MainTheorem2}
The number of facets of $\pasm(m,n)$ equals $4mn - 3m - 3n + 5$.
\end{MainTheorem2}

See Definitions \ref{sum-labeling} and \ref{def:union} through \ref{def:region} for the relevant notation and terminology in the following theorem.
\begin{PasmFaceLatticeThm}
Let $F$ be a face of $\pasm(m,n)$ and $\mathcal{M}(F)$ be the set of partial alternating sign matrices that are vertices of $F$. The map $\psi:F\mapsto g(\mathcal{M}(F))$ induces an isomorphism between the face lattice of $\pasm(m,n)$ and the set of sum-labelings of ${\Gamma}_{(m,n)}$ ordered by containment. Moreover, the dimension of $F$ equals the number of regions of $\psi(F)$.
\end{PasmFaceLatticeThm}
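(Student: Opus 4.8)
The plan is to show that $\psi$ is a well-defined, order-preserving bijection whose inverse is also order-preserving—hence a lattice isomorphism—and then to verify the dimension formula separately. The backbone of the argument is the inequality description of Theorem~\ref{thm:pasmverts}: every face $F$ of $\pasm(m,n)$ is obtained by choosing a subset of the partial-sum inequalities $0 \le \sum_{i'=1}^{i} X_{i'j} \le 1$ and $0 \le \sum_{j'=1}^{j} X_{ij'} \le 1$ and forcing them to hold with equality. The graph $\Gamma_{(m,n)}$ and its sum-labelings are precisely the combinatorial device for recording which of these partial sums are pinned (to $0$ or $1$) across all vertices of $F$, and which remain free.

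First I would show $\psi$ is well defined. Given a face $F$, I apply $g$ to the vertex set $\mathcal{M}(F)$; concretely this records, at each position of $\Gamma_{(m,n)}$, the common value of the associated row or column partial sum whenever that value is constant over every matrix in $\mathcal{M}(F)$, and marks it free otherwise. I would then verify that the resulting labeling satisfies the defining consistency conditions of a sum-labeling, which follow from the integrality and step structure of the partial sums of partial alternating sign matrices.

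Next I would establish bijectivity together with order-preservation. Injectivity is immediate once one notes that a face is determined by its vertex set and that the sum-labeling $\psi(F)$ records exactly which partial-sum equalities cut out $F$, hence determines $F$. For surjectivity, given an arbitrary sum-labeling $L$, I construct the candidate face as the intersection of $\pasm(m,n)$ with the hyperplanes fixing the pinned partial sums of $L$, and then check that its vertices are precisely the partial alternating sign matrices consistent with $L$, so that $\psi$ sends it to $L$. Because passing to a subface corresponds exactly to pinning more partial sums, $\psi$ and $\psi^{-1}$ are order-preserving, giving the lattice isomorphism.

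Finally, for the dimension statement, the idea is that the matrix entries free to vary within $F$ are parametrized by the regions of $\psi(F)$ on which the partial sums are not fixed, so I would set up an explicit affine parametrization of the affine hull of $F$ by these free regions and argue it is a bijection onto the degrees of freedom. The main obstacle I anticipate is making this dimension count fully rigorous: one must show the free regions are in bijection with an affinely independent set of difference vectors of vertices of $F$ that spans the direction space of the affine hull, which requires carefully translating the combinatorics of regions in $\Gamma_{(m,n)}$ into the linear algebra of the affine span. I expect to handle this by exhibiting, for each free region, an explicit pair of vertices of $F$ differing exactly on that region, and then proving these difference vectors are linearly independent.
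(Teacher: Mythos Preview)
Your plan for the bijection and order-preservation is essentially the paper's: the inverse map you describe---intersect $\pasm(m,n)$ with the hyperplanes fixing the pinned partial sums---is exactly the map $\varphi$ the paper constructs, and the verification that $\psi\circ\varphi=\mathrm{id}$ and that both maps respect containment proceeds along the same lines. One small point: with the paper's definition, a sum-labeling \emph{is} a union of basic sum-labelings, so well-definedness of $\psi$ is immediate and there are no separate ``consistency conditions'' to check.

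Where you diverge is the dimension claim. You propose a direct linear-algebraic argument: for each region exhibit a pair of vertices of $F$ differing only there, and prove the resulting difference vectors are independent. This can be made to work, but it is genuinely more laborious than what the paper does, and the independence step is not automatic since the supports of such difference vectors (which are matrix entries, i.e.\ internal vertices of $\Gamma_{(m,n)}$, not edges) need not be disjoint across regions. The paper sidesteps all of this with a short indirect argument: Lemma~\ref{prop:regions} shows that strict containment of sum-labelings strictly increases the region count, and since $\psi$ is a poset isomorphism it sends any maximal chain of faces $F_0\subset F_1\subset\cdots\subset F_{mn}$ to a maximal chain of sum-labelings. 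The bottom sum-labeling has $0$ regions, the top has $mn$, and strict monotonicity along a chain of length $mn$ forces $\mathcal{R}(\psi(F_k))=k=\dim F_k$ for every $k$. Your approach buys an explicit basis for the direction space of each face; the paper's buys brevity and avoids any linear-algebra bookkeeping.
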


The following three theorems from Section~\ref{sec:ppermuto} comprise our second set of main results.
 \begin{MainTheorem3}
The polytope $\mathcal{P}(m,n)$ consists of all vectors $u\in\mathbb{R}^m$  such that:
\begin{align*}
\displaystyle\sum_{i \in S} u_{i} & \leq \binom{n+1}{2}-\binom{n-k+1}{2}, & \mbox{ where } S \subseteq \{1,\ldots,m\}, |S| = k \neq 0 , \mbox{ and } \\
u_i & \geq 0, & \mbox{ for all } 1 \leq i \leq m.
\end{align*}
\end{MainTheorem3}

\begin{MainTheorem4} 
The number of facets of $\mathcal{P}(m,n)$ equals $m + 2^m -1 - \displaystyle\sum_{r=1}^{m-n}\binom{m}{m-r}$.
\end{MainTheorem4}
 
We reinterpret the result \cite[Prop.\ 56]{Manneville-Pilaud} that $\mathcal{P}(m,m)$ is a graph associahedron called the \emph{stellohedron} to prove an alternate characterization of its face lattice in terms of chains in the Boolean lattice $\mathcal{B}_m$. See Definition \ref{def:missing} for the notion of missing ranks.

\begin{MainTheorem5}
The face lattice of $\mathcal{P}(m,m)$ is isomorphic to the lattice of chains in $\mathcal{B}_m$, where $C < C'$ if $C'$ can be obtained from $C$ by iterations of (1) and/or (2) from Lemma~\ref{chain-contain}. A face of $\mathcal{P}(m,m)$ is of dimension $k$ if and only if the corresponding chain has $k$ missing ranks.
\end{MainTheorem5}

We conjecture a similar face lattice characterization for $\mathcal{P}(m,n)$ in the case $m\neq n$.

\begin{Faceconj}
Faces of $\mathcal{P}(m,n)$ are in bijection with chains in $\mathcal{B}_m$ whose difference between largest and smallest nonempty subsets is at most $n-1$. A face of $\mathcal{P}(m,n)$ is of dimension $k$ if and only if the corresponding chain has $k$ missing ranks.
\end{Faceconj}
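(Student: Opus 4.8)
The plan is to split on the sign of $n-m$ and leverage Theorem~\ref{chain-face-lattice}. For $n \ge m$ the condition ``difference between largest and smallest nonempty subsets is at most $n-1$'' is vacuous, since any chain in $\mathcal{B}_m$ has $|S_{\max}| - |S_{\min}| \le m-1 \le n-1$; so it suffices to show that $\mathcal{P}(m,n)$ is combinatorially equivalent to the stellohedron $\mathcal{P}(m,m)$ and then quote Theorem~\ref{chain-face-lattice}. I would establish this from the inequality description in Theorem~\ref{thm:permuto_ineq}: writing $g(k) = \binom{n+1}{2} - \binom{n-k+1}{2} = \sum_{c=1}^{n} \min(k,c)$ exhibits $\mathcal{P}(m,n)$ as the Minkowski sum $\sum_{c=1}^{n} \{u \in [0,1]^m : \sum_i u_i \le c\}$; the summands with $c \ge m$ are full cubes, so $\mathcal{P}(m,n) = \mathcal{P}(m,m) + (n-m)[0,1]^m$. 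Because $\mathcal{P}(m,m)$ itself has a cube Minkowski summand, its normal fan already refines that of the cube, so adding further cubes leaves the normal fan unchanged; the two polytopes thus share a normal fan and hence a face lattice, and the dimension count by missing ranks transfers verbatim from Theorem~\ref{chain-face-lattice}.

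The substantive case is $n < m$, where I would build the face-to-chain map directly, mirroring the construction behind Theorem~\ref{chain-face-lattice}. Given a face $F$, pick $x$ in its relative interior and record the tight \emph{facet} inequalities: the vanishing coordinates $Z = \{i : x_i = 0\}$ at the bottom and the tight upper sets $\{S : \sum_{i \in S} x_i = g(|S|)\}$ at the top. By Theorem~\ref{permutofacets} the only upper facets are the subsets of size at most $n-1$ together with $[m]$, so the tight-facet data assembles into a single chain $C(F)$ in $\mathcal{B}_m$ whose ranks encode the value-thresholds of $x$; I would check that $C(F)$ is constant on relative interiors and that $F \mapsto C(F)$ is injective. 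The key point is that the absence of upper facets of sizes $n, \dots, m-1$ forces every realizable chain to satisfy $|S_{\max}| - |S_{\min}| \le n-1$, while conversely an explicit staircase point realizes each such chain; this is exactly where the cap value $n$ enters as the bound $n-1$.

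For the dimension statement I would count free parameters in the relative interior of $F$: these are the distinct positive value-levels of $x$ that are not pinned by a tight upper facet, and I would identify this number with the count of ranks in $\{0, 1, \dots, |S_{\max}|\}$ that are absent from $C(F)$, i.e.\ the number of missing ranks. Finally, to upgrade the bijection to a lattice isomorphism I would verify that the covering relations of the face poset correspond precisely to the single-step operations (1) and (2) of Lemma~\ref{chain-contain} already used in Theorem~\ref{chain-face-lattice}, so that containment of faces matches the chain order on both sides.

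I expect the main obstacle to be the bookkeeping in the $n < m$ regime caused by the normal fan of $\mathcal{P}(m,n)$ coarsening the braid fan: on a face cut out by a tight \emph{sum} inequality the induced ordering of the coordinates is not constant (the argmax can switch across the relative interior), so the naive superlevel-set encoding is ill defined and one must work with the tight-facet/threshold data instead. Proving that this data is well defined on each relative interior, that it is captured exactly by a chain obeying the stated spread bound, and that missing ranks compute the dimension, is the crux; the remaining work is a routine matching of covering relations to Lemma~\ref{chain-contain}.
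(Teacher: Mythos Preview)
The statement you are attempting to prove is Conjecture~\ref{mn-chain-face} in the paper, not a theorem: the authors give no proof, only the remark that it has been computationally verified for $m,n \le 4$. There is therefore no paper proof to compare your proposal against.

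On the substance of your plan: the $n \ge m$ half looks sound. The identity $g(k) = \sum_{c=1}^{n}\min(k,c)$ exhibits $\mathcal{P}(m,n)$ as a Minkowski sum of polymatroids with rank functions $\min(|S|,c)$, and since the $c=m$ summand is already the cube $[0,1]^m$, the normal fan of $\mathcal{P}(m,m)$ refines the cube fan; adding further cube summands for $c>m$ thus leaves the normal fan unchanged, and combinatorial equivalence with the stellohedron follows. This part, if written out carefully, would establish the conjecture for $n \ge m$.

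The $n<m$ half is where your sketch remains a sketch. You correctly identify the obstruction: for $n<m$ the normal fan of $\mathcal{P}(m,n)$ strictly coarsens the stellohedron fan (the upper facets of sizes $n,\ldots,m-1$ disappear by Theorem~\ref{permutofacets}), so the tubing/chain encoding inherited from Theorem~\ref{chain-face-lattice} is no longer a bijection without modification. Your proposed fix---recording only the tight \emph{facet} data and showing this yields precisely the chains with spread at most $n-1$---is the right shape of argument, but the steps you flag as ``the crux'' (well-definedness on relative interiors, surjectivity onto spread-bounded chains, and the dimension count) are genuinely the hard part and are not supplied. In particular, verifying that the covering relations of the coarsened face poset still match operations (1) and (2) of Lemma~\ref{chain-contain} requires care, since removing an element (operation (2)) can now collapse a chain onto one violating the spread bound. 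As written, this half is a plausible program rather than a proof; completing it would resolve an open conjecture of the paper.
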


We furthermore connect these polytopes by showing that $\pasm(m,n)$ projects to $\mathcal{P}(m,n)$, by a similar technique used to show that alternating sign matrix polytopes project to permutohedra~\cite{striker}. Our last main result is as follows; here $\phi_z$ is the map that multiplies a matrix by $z$ on the right and $\mathcal{P}_z$ is a generalized partial permutohedron determined by $z$.

\begin{MainTheorem6}
Let $z$ be a strictly decreasing vector in $\mathbb{R}^m$. Then
$\phi_z(\pasm(m,n))=\mathcal{P}_z(n,m)$. 
\end{MainTheorem6}

This projection connects matrix polytopes to graph associahedra. We also explore connections to chains in the Boolean lattice. These connections are helpful conceptually, as they relate the face structure of these polytopes to familiar combinatorial objects. 

Finally, we have computed the normalized volume and Ehrhart polynomials of the polytopes studied in this paper. We note that the Ehrhart polynomials we were able to compute have positive coefficients, and have found the following result and conjecture regarding the normalized volume of the partial permutohedron.

\begin{Volthm}
The polytope $\mathcal{P}(2,n)$ has normalized volume equal to $2n^2-1$.
\end{Volthm}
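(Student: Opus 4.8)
The plan is to specialize the inequality description of Theorem~\ref{thm:permuto_ineq} to the case $m=2$, recognize the resulting two-dimensional region as a square with one corner clipped, and compute its area directly. Setting $m=2$, the constraints $u_i\ge 0$ together with the $k=1$ inequalities give $0\le u_1\le n$ and $0\le u_2\le n$, since $\binom{n+1}{2}-\binom{n}{2}=n$. The single $k=2$ inequality (taking $S=\{1,2\}$) reads $u_1+u_2\le \binom{n+1}{2}-\binom{n-1}{2}$, and a short computation shows the right-hand side equals $2n-1$.

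Thus $\mathcal{P}(2,n)$ is the square $[0,n]^2$ intersected with the half-plane $u_1+u_2\le 2n-1$. Since the line $u_1+u_2=2n-1$ meets the boundary of the square at the points $(n,n-1)$ and $(n-1,n)$, the effect of this last inequality is to remove from the square the right triangle with vertices $(n,n)$, $(n,n-1)$, and $(n-1,n)$, whose two legs each have length $1$.

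Next I would compute the Euclidean area: the square has area $n^2$ and the removed corner triangle has area $\tfrac12$, so $\mathcal{P}(2,n)$ has area $n^2-\tfrac12$. Finally, since $\mathcal{P}(2,n)$ is a full-dimensional polytope in $\mathbb{R}^2$, its normalized volume is $2!$ times its Euclidean area, giving $2\bigl(n^2-\tfrac12\bigr)=2n^2-1$, as claimed.

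Because the entire argument rests on an explicit two-dimensional area computation, I expect no serious obstacle. The only points requiring care are verifying that the $k=2$ right-hand side simplifies to $2n-1$ and confirming that the clipped corner is genuinely a nondegenerate triangle lying inside the square (equivalently, that $n\ge 1$ so the cut meets two distinct edges and does not extend past them), which holds for all relevant $n$.
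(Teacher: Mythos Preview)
Your proposal is correct and takes essentially the same approach as the paper: both identify $\mathcal{P}(2,n)$ as the square $[0,n]^2$ with the corner triangle at $(n,n)$ removed, compute the Euclidean area $n^2-\tfrac12$, and multiply by $2!$. The only cosmetic difference is that you derive the pentagon from the inequality description (Theorem~\ref{thm:permuto_ineq}) while the paper lists the five extreme points directly.
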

\begin{Volconj2}
The polytope $\mathcal{P}(m,2)$ has normalized volume equal to $3^m-m$.
\end{Volconj2}

Our outline is as follows.
In Section~\ref{sec:PermMatrices}, we introduce definitions and notation for the families of matrices used to create our polytopes. In Section~\ref{sec:PPermPoly}, we summarize known results on partial permutation polytopes. In Section~\ref{sec:pasm}, we define partial alternating sign matrix polytopes and determine their inequality descriptions, facet enumerations, and face lattice description. In Section~\ref{sec:ppermuto}, we define partial permutohedra. We then determine inequality descriptions and facet enumerations and characterize the face lattices using chains in the Boolean lattice.
We show that the polytopes from Sections~\ref{sec:PPermPoly} and \ref{sec:pasm} project to these partial permutohedra. Finally, at the end of each of Sections~\ref{sec:PPermPoly}, \ref{sec:pasm}, and \ref{sec:ppermuto}, we discuss volumes.

\section{Matrices}
\label{sec:PermMatrices}

In this section, we discuss matrices which generalize permutation matrices and alternating sign matrices. Then in the next section, we study their corresponding polytopes. 
\subsection{Partial permutation matrices}
\label{subsec:pperm}
We begin with the following definition.
\begin{definition}
\label{pperm}
An $m \times n$ \emph{partial permutation matrix} is an $m \times n$ matrix $M = \left(M_{ij}\right)$ with entries in $\{0,1\}$ such that:
\begin{align}
\label{eq:pperm1}
\displaystyle\sum_{j'=1}^{n} M_{ij'} &\in \left\{0,1\right\}, & \mbox{ for all } 1 \leq i \leq m. \\
\label{eq:pperm2}
\displaystyle\sum_{i'=1}^{m} M_{i'j} &\in \left\{0,1\right\}, & \mbox{ for all } 1 \leq j \leq n.
\end{align}
We denote the set of all $m \times n$ partial permutation matrices $P_{m,n}$.
\end{definition}

\begin{remark}
Partial permutations matrices are sometimes called \emph{subpermutation matrices}, see, for example, \cite{Brualdi-Ryser}. We choose the terminology {partial permutation} since we consider our matrices as objects in their own right, rather than as submatrices of larger (square) permutation matrices. The use of the term partial permutation is consistent with literature on square partial permutation matrices, such as \cite{Cao-et-all}. Rectangular partial permutation matrices are mentioned in \cite{Ouchterlony}.
\end{remark}


\begin{remark}
\label{prop:ppermcard}
For any $m$ and $n$, the set of partial permutation matrices $P_{m,n}$ is enumerated by:
\begin{equation}
\displaystyle\sum_{k=0}^{\min(m,n)} \binom{m}{k}\binom{n}{k}k!.
\end{equation}
This follows using standard counting arguments. We first we choose which of the $m$ rows will have a $1$ in them, in $\binom{m}{k}$ ways. For each of these rows, we determine in what column that $1$ will be. There are $n(n-1)\cdots(n-k+1)=\binom{n}{k}k!$ ways to do this.
\end{remark}

\begin{example}
\label{ex:pperms}
The $13$ elements of $P_{2,3}$ are:
\begin{multicols}{3}
$M_1 = \begin{pmatrix} 0 & 0 & 0 \\ 0 & 0 & 0 \end{pmatrix}$

\vspace{0.25cm}

$M_2 = \begin{pmatrix} 1 & 0 & 0 \\ 0 & 0 & 0 \end{pmatrix}$

\vspace{0.25cm}

$M_3 = \begin{pmatrix} 0 & 1 & 0 \\ 0 & 0 & 0 \end{pmatrix}$

\vspace{0.25cm}

$M_4 = \begin{pmatrix} 0 & 0 & 1 \\ 0 & 0 & 0 \end{pmatrix}$

\vspace{0.25cm}

$M_5 = \begin{pmatrix} 0 & 0 & 0 \\ 1 & 0 & 0 \end{pmatrix}$

\vspace{0.25cm}

$M_6 = \begin{pmatrix} 0 & 0 & 0 \\ 0 & 1 & 0 \end{pmatrix}$

\vspace{0.25cm}

$M_7 = \begin{pmatrix} 0 & 0 & 0 \\ 0 & 0 & 1 \end{pmatrix}$

\vspace{0.25cm}

$M_8 = \begin{pmatrix} 1 & 0 & 0 \\ 0 & 1 & 0 \end{pmatrix}$

\vspace{0.25cm}

$M_9 = \begin{pmatrix} 1 & 0 & 0 \\ 0 & 0 & 1 \end{pmatrix}$

\vspace{0.25cm}

$M_{10} = \begin{pmatrix} 0 & 1 & 0 \\ 1 & 0 & 0 \end{pmatrix}$

\vspace{0.25cm}

$M_{11} = \begin{pmatrix} 0 & 1 & 0 \\ 0 & 0 & 1 \end{pmatrix}$

\vspace{0.25cm}

$M_{12} = \begin{pmatrix} 0 & 0 & 1 \\ 1 & 0 & 0 \end{pmatrix}$

\vspace{0.25cm}

$M_{13} = \begin{pmatrix} 0 & 0 & 1 \\ 0 & 1 & 0 \end{pmatrix}$

\end{multicols}
\end{example}

\subsection{Partial alternating sign matrices}
\label{subsec:pasm}
In this subsection, we consider a superset of partial permutation matrices.

\begin{definition}
\label{pasm}
An $m \times n$ \emph{partial alternating sign matrix} is an $m \times n$ matrix $M = \left(M_{ij}\right)$ with entries in $\left\{-1,0,1\right\}$ such that:
\begin{align}
\label{eq:pasm1}
\displaystyle\sum_{i'=1}^{i} M_{i'j} &\in \left\{0,1\right\}, & \mbox{ for all } 1 \leq i \leq m , 1 \leq j \leq n. \\
\label{eq:pasm2}
\displaystyle\sum_{j'=1}^{j} M_{ij'} &\in \left\{0,1\right\}, & \mbox{ for all } 1 \leq i \leq m, 1 \leq j \leq n.
\end{align}
We denote the set of all $m \times n$ partial alternating sign matrices as $\pasm_{m,n}$.
\end{definition}

\begin{remark}
\label{remark:mx_contain}
The set of matrices in $\pasm_{m,n}$ with no $-1$ entries is the set $P_{m,n}$ of partial permutation matrices.
\end{remark}

\begin{example}
\label{ex:pasms}
The set $\pasm_{2,3}$ consists of the 13 matrices from Example~\ref{ex:pperms} plus the following four additional matrices:
\begin{multicols}{4}
$M_{14} = \begin{pmatrix} 0 & 1 & 0 \\ 1 & -1 & 0 \end{pmatrix}$

$M_{15} = \begin{pmatrix} 0 & 1 & 0 \\ 1 & -1 & 1 \end{pmatrix}$

$M_{16} = \begin{pmatrix} 0 & 0 & 1 \\ 1 & 0 & -1 \end{pmatrix}$

$M_{17} = \begin{pmatrix} 0 & 0 & 1 \\ 0 & 1 & -1 \end{pmatrix}$
\end{multicols}
\end{example}

\begin{remark}\label{remark:signmatrices}
Partial alternating sign matrices are a subset of \emph{sign matrices}, which differ from Definition~\ref{pasm} in that each row partial sum is not restricted to $\{0,1\}$ as in (\ref{eq:pasm2}), but may equal any non-negative integer. See \cite{SolhjemStriker} for information about polytopes whose vertices are sign matrices and Lemma~\ref{lem:PmnPASMmn} for the relationship between these polytopes and polytopes  whose vertices are partial alternating sign matrices (see Definition~\ref{pasmpoly}).
\end{remark}

The cardinality of $\pasm_{n,n}$ is given by OEIS sequence A202751 \cite{oeis1}. It is unlikely that there exists a product formula for $|\pasm_{m,n}|$, since, for example, $|\pasm_{6,6}|=1442764=2^2 \cdot 373 \cdot 967$. Note that $|\pasm_{m,n}| = |\pasm_{n,m}|$, since an $n \times m$ partial alternating sign matrix is the transpose of an $m \times n$ partial alternating sign matrix. These cardinalities are given by OEIS sequence A202756 \cite{oeis1}. The bijection between partial alternating sign matrices and the objects described for these sequences in the OEIS is given by an analog of the corner-sum map in the usual alternating sign matrix case (see, for example, \cite{Heuer}).

\begin{remark} The set of $n \times n$ partial alternating sign matrices were studied in a different context 
by Fortin~\cite{Fortin}. He showed that, with a certain poset structure, the lattice of partial alternating sign matrices is the MacNeille completion of the poset of partial permutations (which he called partial injective functions). This is analogous to the result of Lascoux and Sch\"{u}tzenberger
~\cite{TREILLIS} that the lattice of $n\times n$ alternating sign matrices is the MacNeille completion of the strong Bruhat order on $S_n$. Partial alternating sign matrices were also defined in \cite{Osculating}, and studied there in the context of the enumeration of certain osculating lattice paths.
\end{remark}

\section{Partial permutation polytopes}
\label{sec:PPermPoly}
In this section, we give the definition of partial permutation polytopes and review their inequality descriptions and the enumeration of their vertices and facets. These results are known (see Remark~\ref{disclaimer}) or easily deduced, but we include them for completeness and for comparison to the polytopes in the next section. We also compute data for the volume of these polytopes and give a formula for $m=2$ in Theorem~\ref{conj:volPPerm}.

\begin{definition}
\label{ppermpoly}
Let $\pperm(m,n)$ be the polytope defined as the convex hull, as vectors in $\mathbb{R}^{mn}$, of all the matrices in $P_{m,n}$.
Call this the \emph{(m,n)-partial permutation polytope}.
\end{definition}

\begin{remark}
\label{pperm_dim}
The dimension of $\pperm(m,n)$ is $mn$. To see this, let $U_{i,j}$ be the $m \times n$ matrix with $(i,j)$ entry equal to 1 and zeros elsewhere. Note that $U_{i,j} \in \pperm(m,n)$ for all $1 \leq i \leq m$, $1 \leq j \leq n$. Since $\pperm(m,n)$ contains each of these $mn$ unit vectors, its dimension equals the ambient dimension $mn$. 
\end{remark}

\begin{remark}
\label{disclaimer}
The polytopes $\pperm(m,n)$ have been previously studied in different contexts. Since any partial permutation matrix can be reinterpreted as an incidence vector of some matching, $\pperm(m,n)$ is a \emph{matching polytope}. In \cite{BalinskiRussakoff,Chvatal}, adjacency conditions of vertices of matching polytopes were studied. For a nice summary and proof of these results, see \cite[Chapter 25]{Schrijver}. Also, Mirsky showed that the set of $n \times n$ doubly substochastic matrices is the convex hull of all $n \times n$ partial permutation matrices \cite{Mirsky_sub}. This is easily extendable to the $m \times n$ case, which is stated below in Proposition~\ref{ppermcircuit}. As such, $\pperm(m,n)$ is often referred to in the literature as the polytope of $m \times n$ doubly substochastic matrices \cite[Section 9.8]{BrualdiBook2006}. In \cite{AaronAllen}, partial permutations were viewed as rook placements, and edges and faces of $\pperm(m,n)$ were enumerated.
\end{remark}

\begin{proposition}
\label{ppermcircuit}
The polytope $\pperm(m,n)$ consists of all $m \times n$ real matrices $X = (X_{ij})$ such that:
\begin{align}
\label{eq:ppermpoly1}
X_{ij} & \geq 0 , & \mbox{ for all } & 1 \leq i \leq m, 1 \leq j \leq n, \\ 
\label{eq:ppermpoly2}
\displaystyle\sum_{j'=1}^n X_{ij'} & \leq 1, & \mbox{ for all } & 1 \leq i \leq m, \\ 
\label{eq:ppermpoly3}
\displaystyle\sum_{i'=1}^n X_{i'j} & \leq 1, & \mbox{ for all } & 1 \leq j \leq n.
\end{align}
\end{proposition}

Since these inequalities are irredundant, we obtain the following as a corollary.
\begin{corollary}
The number of facets of $\pperm(m,n)$ equals $mn+m+n$. 
\end{corollary}

We may also easily count the vertices using the fact that this is a $0$-$1$ polytope, which implies that each matrix in $P_{m,n}$ is extreme. See also \cite[Theorem 9.8.3]{BrualdiBook2006}.
\begin{proposition}
The vertices of $\pperm(m,n)$ are exactly the matrices in $P_{m,n}$, so $\pperm(m,n)$ has $\displaystyle\sum_{k=0}^{\min(m,n)} \binom{m}{k}\binom{n}{k}k!$ vertices.
\end{proposition}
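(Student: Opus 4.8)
The plan is to show that the set of vertices of $\pperm(m,n)$ is exactly $P_{m,n}$; the stated count then follows immediately from Proposition~\ref{prop:ppermcard}. One inclusion is automatic: since $\pperm(m,n)$ is by definition the convex hull of the finite set $P_{m,n}$, every vertex of the polytope must be one of the generating matrices, so the vertex set is contained in $P_{m,n}$. The real content is the reverse inclusion, namely that \emph{each} matrix in $P_{m,n}$ is an extreme point rather than a convex combination of the others.

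For that, I would first observe from Theorem~\ref{ppermcircuit} that $\pperm(m,n)$ lies inside the unit cube $[0,1]^{mn}$. Indeed, each entry satisfies $X_{ij} \geq 0$ by inequality (\ref{eq:ppermpoly1}), and since all entries in a given row are nonnegative, $X_{ij} \leq \sum_{j'=1}^{n} X_{ij'} \leq 1$ by (\ref{eq:ppermpoly2}). Hence $0 \leq X_{ij} \leq 1$ for every $(i,j)$, so $\pperm(m,n) \subseteq [0,1]^{mn}$.

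Now fix $M \in P_{m,n}$. Because $M$ has all entries in $\{0,1\}$, it is a $0/1$ vector and therefore a vertex of the cube $[0,1]^{mn}$, and a vertex of the cube that lies in a subpolytope of the cube remains a vertex of that subpolytope. To certify this concretely, I would exhibit a linear functional uniquely maximized at $M$: set $c(X) = \sum_{(i,j):\,M_{ij}=1} X_{ij} - \sum_{(i,j):\,M_{ij}=0} X_{ij}$. Over $[0,1]^{mn}$ this functional is maximized exactly when $X_{ij}=1$ on the support of $M$ and $X_{ij}=0$ off it, i.e.\ uniquely at $X = M$; since $\pperm(m,n) \subseteq [0,1]^{mn}$ and $M \in \pperm(m,n)$, the same functional is uniquely maximized at $M$ over $\pperm(m,n)$, so $M$ is a vertex.

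Combining the two inclusions shows the vertex set is precisely $P_{m,n}$, and then Proposition~\ref{prop:ppermcard} gives the cardinality $\sum_{k=0}^m \binom{m}{k}(n)_k$. There is no serious obstacle here; the only point requiring a moment's care is the cube containment derived from Theorem~\ref{ppermcircuit}, after which the extreme-point certification is routine. The argument is essentially the standard fact that every $0/1$ point of a polytope sitting inside the unit cube is automatically a vertex.
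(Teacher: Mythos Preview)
Your argument is correct and is essentially a fleshed-out version of what the paper leaves implicit: the paper simply asserts that ``it is easily seen that each matrix in $P_{m,n}$ is an extreme point of $\pperm(m,n)$'' and then invokes Proposition~\ref{prop:ppermcard} for the count. Your cube-containment plus linear-functional certification is exactly the standard way to justify that assertion, so the approaches coincide.
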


\begin{remark}
The normalized volume of $\pperm(m,n)$ for small values of $m$ and $n$ is given in Figure~\ref{pperm_vol}, computed using SageMath~\cite{sage}. These computations for $\pperm(2,2)$ and $\pperm(3,3)$ appear in \cite[p.\ 609]{Stanley1973}. In \cite[Table 2]{KohlOlsenSanyal}, the normalized volume for $\pperm(4,4)$ appears, and lower bound is obtained for $\pperm(5,5)$.
\end{remark}

\begin{figure}[hbtp]
\begin{tabular}{|c|c|c|c|c|c|}
\hline
\diaghead(1,-1)%
   {\theadfont nnn}%
   {$m$}{$n$} & 1 & 2   & 3      & 4          & 5              \\ \hline
    1     & 1 & 1   & 1      & 1          &  1             \\ \hline
    2     & 1 & 4   & 17     & 66         & 247            \\ \hline
    3     & 1 & 17  & 642    & 22148      & 622791         \\ \hline
    4     & 1 & 66  & 22148  & 12065248   & 5089403019     \\ \hline
    5     & 1 & 247 & 622791 & 5089403019 & 53480547965190 \\ \hline
\end{tabular}
\caption{The normalized volume of $\pperm(m,n)$ for small values of $m$ and $n$.}
\label{pperm_vol}
\end{figure}

 Note that there does not appear to be a nice general formula for these volumes. However, when one parameter is set equal to two, we conjectured the following in an earlier draft of this paper. We thank the anonymous referee for the proof.

\begin{theorem}
\label{conj:volPPerm}
The normalized volume of $\pperm(n,2)$ (or equivalently $\pperm(2,n)$) is equal to $\binom{2n}{n}-n$.
\end{theorem}

\begin{proof}
The result will be proved by obtaining the Ehrhart polynomial of $\pperm(2,n)$, as follows.

Consider any positive integer $n$ and nonnegative integer $r$, and let $\R(n,r)$ be the
set of $2\times n$ matrices with nonnegative integer entries such that the sum of entries in each
row is at most $r$. Since the number of $n$-tuples of nonnegative integers with sum at most
$r$ is $\binom{n+r}{n}$, it follows that
\begin{equation}
\label{conj1}
|\R(n,y)|=\binom{n+r}{n}^2.
\end{equation}
For any matrix in $\R(n,r)$, the sum of all entries is at most $2r$, and hence there can be no
more than one column whose sum of entries is greater than $r$. Let $\eR(n, r)$ be the set of
matrices in $\R(n, r)$ for which the sum of entries in each column is at most $r$, and $\R(n, r)_j$
be the set of matrices in $\R(n, r)$ for which the sum of entries in column $j$ is greater than $r$.
It can be seen, by interchanging matrix columns, that $\R(n, r)_j$ is independent of $j$, for
$j = 1,\ldots,n$. Since $\R(n, r)$ is the disjoint union of $\eR(n, r), \R(n, r)_1,\ldots,\R(n, r)_n$, it follows that
\begin{equation}
\label{conj2}
|\R(n,r)|=|\eR(n,r)|+n|\R(n,r)_n|.
\end{equation}
Now let $\Q(n, r)$ be the set of $2n$-tuples of nonnegative integers with sum at most $r - 1$.
Then
\begin{equation}
\label{conj3}
|\Q(n,r)|=\binom{2n+r-1}{2n}.
\end{equation}
It can easily be shown that a bijection from $\R(n, r)_n$ to $Q(n, r)$ is obtained by mapping any 
$\begin{pmatrix}
x_{1,1} & \cdots & x_{1,n} \\
x_{2,1} & \cdots & x_{2,n}
\end{pmatrix}\in\R(n, r)_n$ to
$(x_{1,1},\ldots,x_{1,n-1},r-\sum_{j=1}^{n}x_{1,j},x_{2,1},\ldots,x_{2,n-1},r-\sum_{j=1}^{n}x_{2,j})$. Note that the entries of this $2n$-tuple are nonnegative integers with sum $2r - x_{1,n} - x_{2,n}$, which is at most $r-1$ since $x_{1,n}+x_{2,n}$ is greater than $r$. Note also that the inverse bijection is obtained by mapping any $(y_1,\ldots,y_{2n})\in\Q(n,r)$ to 
\[\begin{pmatrix}
y_{1} & \cdots & y_{n-1} & r-\sum_{i=1}^{n} y_i \\
y_{n+1} & \cdots & y_{2n-1} & r-\sum_{i=n+1}^{2n} y_i
\end{pmatrix}.\]
Therefore $|\R(n,r)_n)|=|\Q(n,r)|$, and using this together with (\ref{conj1})--(\ref{conj3}) gives 
\begin{equation}
\label{conj4}
|\eR(n,r)|=\binom{n+r}{n}^2-n\binom{2n+r-1}{2n}.
\end{equation}
Finally, observe that $\eR(n, r)$ is the set of integer points of the $r$-th dilate of the integral
polytope $\pperm(2, n)$, and hence $|\eR(n, r)|$ as a function of $r$ is the Ehrhart polynomial
of $\pperm(2, n)$. The normalized volume of any integral $d$-dimensional polytope $P$ in $\mathbb{R}^d$
is $d!$ times the coefficient of $r^d$ in the Ehrhart polynomial of $P$ as a function of $r$. Since
$\pperm(2, n)$ is a $2n$-dimensional polytope in $\mathbb{R}^{2n}$, and the coefficient of $r^{2n}$ on the right-hand side of (\ref{conj4}) is $1/(n!)^2-n/(2n)!$, it follows that the normalized volume of $\pperm(2, n)$ is $\binom{2n}{n}-n$, as required.
\end{proof}

\begin{remark}
We have used SageMath to compute the Ehrhart polynomials for $\pperm(m,n)$ for $m,n \leq 5$ and note that in all of these cases their coefficients are positive.
\end{remark}

\section{Partial alternating sign matrix polytopes}
\label{sec:pasm}
In this section, we define partial alternating sign matrix polytopes. We give an inequality description and facet enumeration in Subsection~\ref{subsec:pasmVFI}. In Subsection~\ref{subsec:pasmfacelattice}, we determine the face lattice. We also compute the volume for small values of $m$ and $n$ in Subsection~\ref{subsec:pasmvolume}.

\subsection{Vertices, facets, inequality description}
\label{subsec:pasmVFI}
In this subsection, we give the definition of partial alternating sign matrix polytopes. In Proposition~\ref{prop:vertpasm}, we determine the vertices. We prove an inequality description in Theorem~\ref{thm:pasmverts}. Then in Theorem~\ref{pasmfacets}, we enumerate the facets.

\begin{definition}
\label{pasmpoly}
Let $\pasm(m,n)$ be the polytope defined as the convex hull, as vectors in $\mathbb{R}^{mn}$, of all the matrices in $\pasm_{m,n}$. 
Call this the $(m,n)$-\emph{partial alternating sign matrix polytope}.
\end{definition}

\begin{remark}
\label{pasm_dim}
$\pasm(m,n)$ contains $\pperm(m,n)$, since, as noted in Remark~\ref{remark:mx_contain}, the set of partial alternating sign matrices $\pasm_{m,n}$ contains all the partial permutation matrices $P_{n,m}$. So the dimension of $\pasm(m,n)$ is the ambient dimension $mn$, since by Remark~\ref{pperm_dim}, this is the dimension of $\pperm(m,n)$.
\end{remark}

\begin{proposition}
\label{prop:vertpasm}
The vertices of $\pasm(m,n)$ are exactly the matrices in $\pasm_{m,n}$.
\end{proposition}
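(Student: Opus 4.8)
The plan is to show that every matrix $M \in \pasm_{m,n}$ is an extreme point of the polytope. Since $\pasm(m,n)$ is by definition the convex hull of the finite set $\pasm_{m,n}$, its vertex set is automatically contained in $\pasm_{m,n}$; hence establishing extremity of each such $M$ suffices to conclude that the two sets coincide. To prove extremity, I would suppose $M = \tfrac{1}{2}(A+B)$ for some $A,B \in \pasm(m,n)$ and deduce that necessarily $A = B = M$.

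The key device is to pass to the column partial sums: for any matrix $X=(X_{ij})$ write $C_{ij}(X) = \sum_{i'=1}^{i} X_{i'j}$. First I would observe that every point of $\pasm(m,n)$ satisfies $0 \le C_{ij}(X) \le 1$ for all $i,j$. Indeed, writing $X$ as a convex combination $\sum_k \lambda_k M^{(k)}$ of matrices $M^{(k)} \in \pasm_{m,n}$ and using that each $C_{ij}(M^{(k)}) \in \{0,1\}$ by \eqref{eq:pasm1}, the quantity $C_{ij}(X) = \sum_k \lambda_k C_{ij}(M^{(k)})$ is a convex combination of $0$'s and $1$'s and so lies in $[0,1]$. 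Note this does not require the full inequality description of Theorem~\ref{thm:pasmverts}, which is convenient since that theorem comes later.

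Next comes the forcing step, which is the heart of the argument. Fix $(i,j)$. Since $M \in \pasm_{m,n}$ we have $C_{ij}(M)\in\{0,1\}$, and by linearity $C_{ij}(M) = \tfrac12\bigl(C_{ij}(A)+C_{ij}(B)\bigr)$ with both $C_{ij}(A), C_{ij}(B) \in [0,1]$. If $C_{ij}(M)=0$, then two nonnegative numbers average to $0$, forcing $C_{ij}(A)=C_{ij}(B)=0$; if $C_{ij}(M)=1$, then two numbers each at most $1$ average to $1$, forcing $C_{ij}(A)=C_{ij}(B)=1$. In either case $C_{ij}(A)=C_{ij}(B)=C_{ij}(M)$ for every $(i,j)$.

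Finally, the column partial sums determine a matrix uniquely, since $X_{ij} = C_{ij}(X) - C_{i-1,j}(X)$ with the convention $C_{0,j}(X)=0$. Therefore $A=B=M$, so $M$ cannot be written as a nontrivial convex combination of other points of the polytope, i.e.\ $M$ is a vertex. The only genuinely delicate point is the boundary-forcing step, and it is clean precisely because the partial sums of the elements of $\pasm_{m,n}$ attain only the extreme values $0$ and $1$; an entirely symmetric argument using row partial sums and \eqref{eq:pasm2} would work equally well.
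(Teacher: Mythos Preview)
Your argument is correct. It differs from the paper's proof in technique, though both rest on the same underlying observation that the column partial sums $C_{ij}(M)=\sum_{i'=1}^{i}M_{i'j}$ of a partial alternating sign matrix take only the extreme values $0$ and $1$ of the interval $[0,1]$ in which the partial sums of arbitrary points of the polytope must lie.

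The paper proceeds by citing \cite[Theorem~4.3]{SolhjemStriker}: for each $M$ it exhibits an explicit affine functional, namely
\[
\sum_{(i,j)\in C_M}\sum_{i'=1}^{i}X_{i'j}\;-\;\sum_{(i,j)\notin C_M}\sum_{i'=1}^{i}X_{i'j},
\]
that separates $M$ from every other sign matrix (hence from every other partial alternating sign matrix). Your approach instead uses the midpoint characterization of extreme points and a coordinate-by-coordinate boundary-forcing argument on the partial sums. Your route is more self-contained, avoiding the external citation, and arguably more transparent; the paper's route has the advantage of producing an explicit supporting hyperplane, which can be useful elsewhere. Both are short, and neither requires the inequality description of Theorem~\ref{thm:pasmverts}, so there is no circularity in either case.
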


\begin{proof}
All $m \times n$ sign matrices are vertices of the sign matrix polytope (defined as their convex hull) \cite[Theorem 4.3]{SolhjemStriker}. 
Since $m \times n$ partial alternating sign matrices are a subset of $m \times n$ sign matrices (see Remark~\ref{remark:signmatrices}), it is immediate that they are also vertices of their convex hull.
\end{proof}

We now give the following definitions from \cite{SolhjemStriker}, which we will use in the proof of Theorem~\ref{thm:pasmverts}.

\begin{definition}[\protect{\cite[Definition 3.3]{SolhjemStriker}}]
\label{def:gamma}
We define the $m \times n$ \emph{grid graph} $\Gamma_{(m,n)}$ as follows.
The vertex set is $V(m,n):=\{(i,j)$ : $1 \leq i \leq m+1$, $1 \leq j \leq n+1  \} - \{(m+1,n+1)\}$.
We separate the vertices into two categories. We say the \emph{internal vertices} are \{$(i,j)$ : $1 \leq i \leq m$, $1 \leq j \leq n$\} and the \emph{boundary vertices} are \{$(m+1,j) \mbox{ and } (i,n+1)$ : $1 \leq i \leq m$, $1 \leq j \leq n$\}. The edge set is:
\[E(m,n):=  \begin{cases} (i,j) \text{ to } (i+1,j) & 1 \leq i \leq m, 1 \leq j \leq n\\ (i,j) \text{ to } (i,j+1) & 1 \leq i \leq m, 1 \leq j \leq n. \end{cases}\]
Edges between internal vertices are called \emph{internal edges} and any edge between an internal and boundary vertex is called a \emph{boundary edge}. We draw the graph with $j$ increasing to the right and $i$ increasing down, to correspond with matrix indexing.
\end{definition}

\begin{definition}[\protect{\cite[Definition 3.4]{SolhjemStriker}}]
\label{def:Xhat}
Given an $m \times n$ matrix $X$, we define a labeled graph, $\hat{X}$, which is a labeling of the vertices and edges of $\Gamma_{(m,n)}$ from Definition~\ref{def:gamma}. 
The horizontal edges from $(i,j)$ to $(i,j+1)$ are each labeled by the corresponding row partial sum $r_{ij}= \displaystyle\sum_{j'=1}^{j} X_{ij'}$ ($1\leq i \leq m, 1 \leq j \leq n$). Likewise, the vertical edges from $(i,j)$ to $(i+1,j)$ are each labeled by the corresponding column partial sum $c_{ij} = \displaystyle\sum_{i'=1}^{i} X_{i'j}$ ($1\leq i \leq m, 1 \leq j \leq n$).
\end{definition}

The following theorem gives an inequality description of $\pasm(m,n)$. The proof uses a combination of ideas from \cite{SolhjemStriker, striker}.

\begin{theorem}
\label{thm:pasmverts}
The polytope $\pasm(m,n)$ consists of all $m\times n$ real matrices $X = (X_{ij})$ such that:
\begin{align}
\label{eq:pasmpoly1}
0 \leq \displaystyle\sum_{i'=1}^{i} X_{i'j} & \leq 1, & \mbox{ for all } & 1 \leq i \leq m, 1 \leq j \leq n, \\
\label{eq:pasmpoly2}
0 \leq \displaystyle\sum_{j'=1}^{j} X_{ij'} & \leq 1, & \mbox{ for all } & 1 \leq i \leq m, 1 \leq j \leq n.
\end{align}
\end{theorem}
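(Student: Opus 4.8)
The plan is to prove the two inclusions separately, writing $Q$ for the set of $m\times n$ real matrices satisfying (\ref{eq:pasmpoly1})--(\ref{eq:pasmpoly2}). The inclusion $\pasm(m,n)\subseteq Q$ is immediate: every partial alternating sign matrix satisfies the partial sum conditions (\ref{eq:pasm1})--(\ref{eq:pasm2}) of Definition~\ref{pasm}, which are exactly the requirement that each partial sum lie in $\{0,1\}\subseteq[0,1]$. Since $Q$ is an intersection of half-spaces it is convex, and as it contains every vertex of $\pasm(m,n)$ (the matrices of $\pasm_{m,n}$, by Proposition~\ref{prop:vertpasm}) it contains their convex hull.

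For the reverse inclusion $Q\subseteq\pasm(m,n)$ I would show that every vertex of $Q$ lies in $\pasm_{m,n}$; since $Q$ is the convex hull of its vertices this yields $Q\subseteq\pasm(m,n)$, hence equality. The key reduction is the observation that a real matrix $X$ is a partial alternating sign matrix if and only if all of its partial row sums $r_{ij}$ and partial column sums $c_{ij}$ are integers: the inequalities force these partial sums into $\{0,1\}$, and then each entry $X_{ij}=c_{ij}-c_{i-1,j}$ is a difference of two elements of $\{0,1\}$, hence lies in $\{-1,0,1\}$, which together with the partial sum conditions is precisely Definition~\ref{pasm}. Thus it suffices to prove that a vertex of $Q$ has all partial sums integral.

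I would establish this by contraposition, via a perturbation supported on the grid graph. Suppose $X\in Q$ has a fractional partial sum; form the labeled graph $\hat{X}$ of Definition~\ref{def:Xhat}, whose edge labels are exactly the partial sums $r_{ij}$ and $c_{ij}$, and let $F$ be the set of edges carrying a non-integer label. At each internal vertex $(i,j)$ the incident labels satisfy the integer relation $r_{ij}-r_{i,j-1}=c_{ij}-c_{i-1,j}$ (with the top and left boundary labels equal to $0$), so the signed sum of the fractional parts of the incident labels is an integer; in particular no internal vertex is incident to exactly one edge of $F$. Hence in $F$ every internal vertex has degree $0$ or at least $2$, while each boundary vertex of $\Gamma_{(m,n)}$ has degree at most $1$. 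From a nonempty $F$ I would then extract either a cycle of fractional edges or a path of fractional edges joining two boundary vertices (a component with no boundary vertex has minimum degree $\geq 2$ and so contains a cycle; otherwise its degree-$1$ vertices are all boundary vertices, yielding a boundary-to-boundary path). Assigning values $\pm\epsilon$ to the labels along this cycle or path so as to satisfy the consistency relation at each internal vertex produces a nonzero matrix $D$ whose partial sums are supported there; because only fractional (hence non-tight) labels are altered, $X\pm\epsilon D\in Q$ for small $\epsilon>0$, exhibiting $X$ as a midpoint and contradicting that it is a vertex.

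The main obstacle is this last step: reconciling the grid-graph perturbation with the matrix consistency relations. I must check that the $\pm\epsilon$ assignment can be chosen to satisfy the relation $r_{ij}-r_{i,j-1}=c_{ij}-c_{i-1,j}$ at every internal vertex simultaneously and that it \emph{closes up} around a cycle, so that it genuinely arises from an honest nonzero matrix $D$, while leaving every tight (integral) label unchanged. Handling the boundary cases carefully---the top row and left column, where the convention that absent labels equal $0$ must be respected, and the loose ends at boundary vertices, where a fractional full row or column sum has slack on both sides---is where the bookkeeping, and the adaptation of the arguments of \cite{SolhjemStriker, striker}, will be concentrated.
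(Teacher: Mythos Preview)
Your approach is correct and essentially mirrors the paper's: both locate a path or cycle of fractional (``inner'') partial sums in $\hat{X}$ and perturb along it, with the paper phrasing this as an explicit iterated convex decomposition $X=\lambda X^{+}+(1-\lambda)X^{-}$ (via alternating $(+)/(-)$ labels on the corners of the path) rather than as a non-extremality argument. Your flagged obstacle dissolves once you note that in the relation $\delta r_{ij}+\delta c_{i-1,j}=\delta c_{ij}+\delta r_{i,j-1}$ each edge carries opposite signs at its two endpoints, so a $\pm\epsilon$ assignment propagated along any simple cycle automatically closes up.
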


\begin{proof}
Let $X\in\pasm(m,n)$. First we need to show that $X$ satisfies (\ref{eq:pasmpoly1}) and (\ref{eq:pasmpoly2}). Now $X = \sum_\gamma \mu_\gamma M_\gamma$ where $\mu_\gamma \geq 0$, $\sum_\gamma \mu_\gamma = 1$, and the $M_\gamma \in \pasm_{m,n}$. Since we have a convex combination of partial alternating sign matrices, by Definition~\ref{pasm}, we obtain (\ref{eq:pasmpoly1}) and (\ref{eq:pasmpoly2}) immediately. Thus $\pasm(m,n)$ fits the inequality description.

Let $X$ be a real-valued $m \times n$ matrix satisfying (\ref{eq:pasmpoly1}) and (\ref{eq:pasmpoly2}). We wish to show that $X$ can be written as a convex combination of partial alternating sign matrices in $\pasm_{m,n}$, so that $X$ is in $\pasm(m,n)$.

Consider the corresponding labeled graph $\hat{X}$ of Definition~\ref{def:Xhat}. By assumption, all labels $\alpha$ of $\hat{X}$ satisfy $0\leq \alpha\leq 1$, since the labels equal partial row and column sums. Furthermore, by Definition~\ref{pasm}, the labels in $\hat{X}$ are all $0$ or $1$ if and only if $X$ is already a partial alternating sign matrix. So if $X$ is not a partial alternating matrix, there is at least one label strictly between $0$ and $1$. We will construct a trail in $\hat{X}$ all of whose edges are labeled by numbers that are strictly between $0$ and $1$ and show it is a simple path or cycle. We then use this path to express $X$ as a convex combination of two matrices that are both `closer' to being partial alternating sign matrices, in the sense that they will each have at least one more partial sum equal to $0$ or $1$.

Recall the notation for row and column partial sums, $r_{ij}$ and $c_{ij}$, from Definition~\ref{def:Xhat}. In addition, set $r_{i0}=0=c_{0j}$ for all $i,j$. Then for all $1\leq i\leq \lambda_1, 1\leq j\leq n$, we have $X_{ij}=r_{ij}-r_{i, j-1}=c_{ij}-c_{i-1,j}$.  Thus,
\begin{equation}
\label{eq:rc}
r_{ij}+c_{i-1,j}=c_{ij}+r_{i, j-1}.
\end{equation}

If there exists $i$ or $j$ such that boundary edge label $r_{i,n}$ or $c_{m,j}$ is strictly between $0$ and $1$, begin constructing the trail at the adjacent boundary vertex. If no such $i$ or $j$ exist, start the trail on any vertex, say $(i,j)$ adjacent to an edge with label strictly between $0$ and $1$. By (\ref{eq:rc}), at least one other adjacent edge label
is also strictly between $0$ and $1$, so we may begin forming a trail by moving through edges with  labels strictly between $0$ and $1$. From the starting point, construct the trail as follows. Go along a row or column from the starting vertex along edges with labels strictly between $0$ and $1$.
Continue in this manner until either (1) you reach a vertex adjacent to an edge that was previously in the trail, or (2) you reach a new boundary vertex. 
If (1), then the part of the trail constructed between the first and second time you reached that vertex will be a simple cycle. That is, we cut off any part that was constructed before the first time that vertex was reached. 
If (2), then the starting point for the trail must have been a boundary vertex, since there exists at least one boundary edge with label strictly between $0$ and $1$. Thus our trail is actually a path.

Label the corner vertices of the path or cycle (not the boundary vertices) alternately $(+)$ and $(-)$. Set $\ell^+$ equal to the largest positive number that we could subtract from the entries of $X$ corresponding to the $(-)$ vertices and add to the entries of $X$ corresponding to the $(+)$ vertices while still satisfying (\ref{eq:pasmpoly1}) and (\ref{eq:pasmpoly2}). Such an $\ell^+$ exists since the path or cycle was constructed such that the edge labels in $\hat{X}$ are strictly between $0$ and $1$. This means, the corresponding partial sums of $X$ are also strictly between $0$ and $1$. Construct a matrix $X^+$ by subtracting and adding $\ell^+$ to the specified entries of $X$ in this way and leaving all other entries fixed. $X^+$ is a matrix which stills satisfies (\ref{eq:pasmpoly1}) and (\ref{eq:pasmpoly2}) and which has least one more partial row or column sum equal to $0$ or $1$ than $X$ does.

Now give opposite labels to the corner vertices of the path or cycle and set $\ell^-$ equal to the largest positive number we could subtract from the entries of $X$ corresponding to the $(-)$ vertices and add to the entries of $X$ corresponding to the $(+)$ vertices while still satisfying (\ref{eq:pasmpoly1}) and (\ref{eq:pasmpoly2}). Add and subtract in a similar way to create $X^-$, another matrix satisfying (\ref{eq:pasmpoly1}) and (\ref{eq:pasmpoly2}) and which has least one more partial row or column sum equal to $0$ or $1$ than $X$ does.

Both $X^+$ and $X^-$ satisfy (\ref{eq:pasmpoly1}) and (\ref{eq:pasmpoly2}) by construction. Also by construction, \[X=\frac{\ell^-}{\ell^++\ell^-}X^++\frac{\ell^+}{\ell^++\ell^-}X^-,\] $\frac{\ell^-}{\ell^++\ell^-}$ and $\frac{\ell^+}{\ell^++\ell^-}$ are positive, and $\frac{\ell^-}{\ell^++\ell^-} + \frac{\ell^+}{\ell^++\ell^-} = 1$. So $X$ is a convex combination of the two matrices $X^+$ and $X^-$ that still satisfy the inequalities and are each at least one step closer to being partial alternating sign matrices, since they have at least one more partial sum attaining its maximum or minimum bound. By repeatedly applying this procedure, $X$ can be written as a convex combination of partial alternating sign matrices.

See Figure~\ref{pasmcircuitfig} and Example~\ref{pasmcircuitex} for an example of this construction.
\end{proof}

\begin{figure}[hbtp]
\includegraphics[scale=0.4, valign=m]{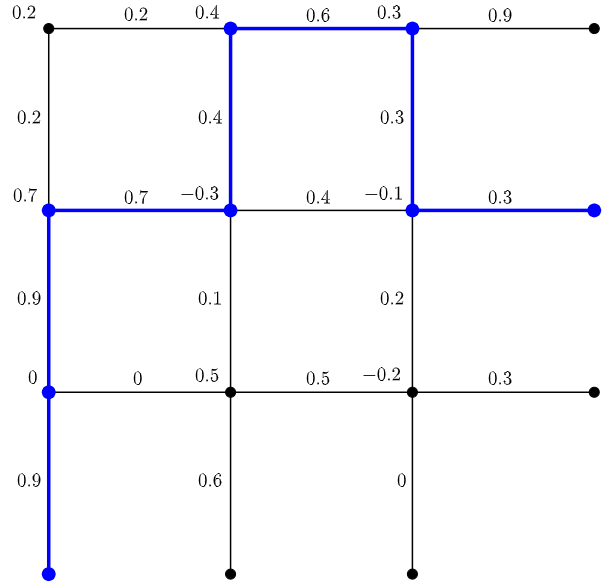}

\vspace{0.5cm}

$\swarrow$ \hspace{1.75in} $\searrow$

\vspace{0.5cm}

\includegraphics[scale=0.4, valign=m]{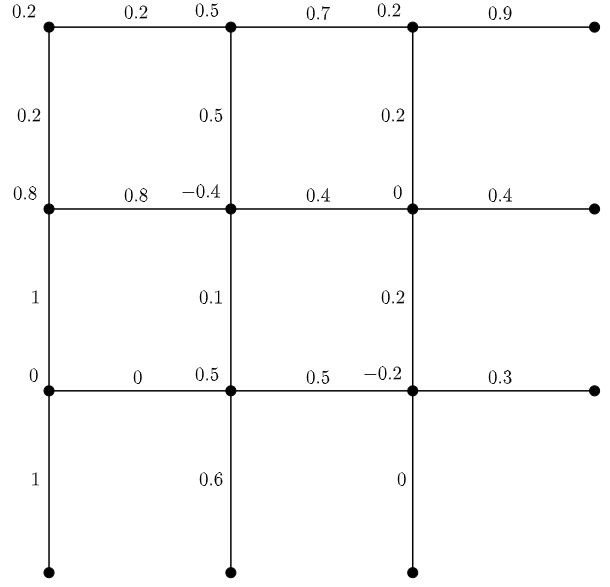} \hspace{0.5in}
\includegraphics[scale=0.4, valign=m]{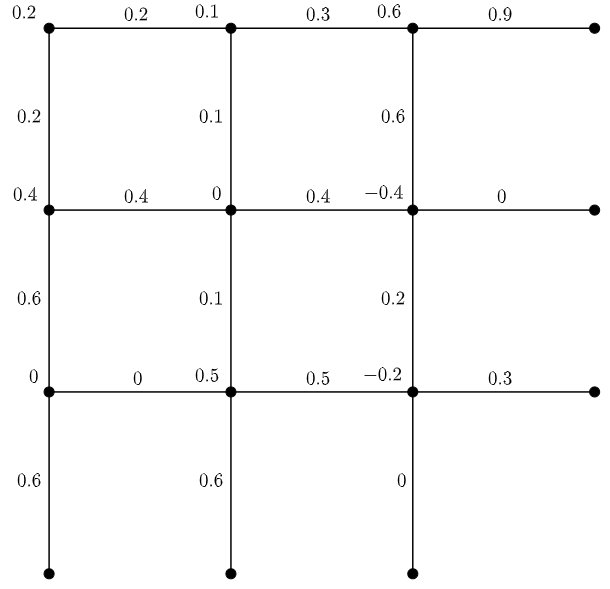}
\caption{An example of the path construction described in the proof of Theorem~\ref{thm:pasmverts}. The blue edges and vertices are those included in the path. Here we have labeled the interior vertices with their corresponding matrix entry.}
\label{pasmcircuitfig}
\end{figure}

\begin{example}
\label{pasmcircuitex}
Let $X = \begin{pmatrix} 0.2 & 0.4 & 0.3 \\ 0.7 & -0.3 & -0.1 \\ 0 & 0.5 & -0.2 \end{pmatrix}$. Then by the construction described in the proof of Theorem~\ref{thm:pasmverts} and shown in Figure~\ref{pasmcircuitfig}, $X$ can be decomposed as $X = \frac{0.3}{0.1+0.3}X^+ + \frac{0.1}{0.1+0.3}X^-$, where $X^+ = \begin{pmatrix} 0.2 & 0.5 & 0.2 \\ 0.8 & -0.4 & 0 \\ 0 & 0.5 & -0.2 \end{pmatrix}$ and $X^- = \begin{pmatrix} 0.2 & 0.1 & 0.6 \\ 0.4 & 0 & -0.4 \\ 0 & 0.5 & -0.2 \end{pmatrix}$. In this step of decomposing, $\ell^+ = 0.1$ and $\ell^- = 0.3$. Continuing the process of decomposition, one could write $X$ as a convex combination of partial alternating sign matrices.
\end{example}

Theorem~\ref{thm:pasmverts} gives a simple inequality description, but it is not a \emph{minimal} inequality description. That is, some of the inequalities in (\ref{eq:pasmpoly1}) and (\ref{eq:pasmpoly2}) are redundant. In the following theorem, we determine these redundancies to count the inequalities that determine facets.

\begin{theorem}\label{pasmfacets}
The number of facets of $\pasm(m,n)$ equals $4mn - 3m - 3n + 5$.
\end{theorem}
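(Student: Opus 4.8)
The plan is to work from the inequality description of Theorem~\ref{thm:pasmverts}, which presents $\pasm(m,n)$ by $4mn$ inequalities: a lower bound ($\geq 0$) and an upper bound ($\leq 1$) on each of the $mn$ column partial sums $c_{ij}=\sum_{i'=1}^{i}X_{i'j}$ and on each of the $mn$ row partial sums $r_{ij}=\sum_{j'=1}^{j}X_{ij'}$. Since the number of facets equals the number of inequalities in a minimal (irredundant) description, I would (i) locate every redundant inequality among these $4mn$ and prove it is implied by the others, and (ii) prove that each surviving inequality is genuinely facet-defining. Tallying the survivors then gives $4mn-3m-3n+5$. I would carry out the argument for $m,n\geq 2$; the degenerate cases $m=1$ or $n=1$, where extra coincidences occur, are checked directly.

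For step (i), the crucial observation is that the surviving lower bounds force monotonicity along the first row and first column. The bounds $c_{1j}=X_{1j}\geq 0$ say precisely that every first-row entry is nonnegative, so $r_{11}\leq r_{12}\leq\cdots\leq r_{1n}$; hence $r_{1j}\leq 1$ is implied by $r_{1n}\leq 1$ whenever $j<n$, and $r_{1j}\geq 0$ is implied by $r_{11}\geq 0$ whenever $j>1$. Symmetrically, the bounds $r_{i1}=X_{i1}\geq 0$ make $c_{11}\leq c_{21}\leq\cdots\leq c_{m1}$, so $c_{i1}\leq 1$ is redundant for $i<m$ and $c_{i1}\geq 0$ is redundant for $i>1$. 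Each single-entry upper bound is also implied, since $X_{1j}=c_{1j}\leq r_{1n}\leq 1$ and $X_{i1}=r_{i1}\leq c_{m1}\leq 1$. Finally, the two constraints on $X_{11}$ coincide in the two families (as $r_{11}=c_{11}=X_{11}$), so there are only $4mn-2$ distinct inequalities. Verifying each of these short linear implications and carefully accounting for overlaps, I would find that exactly $3m+3n-7$ of the $4mn-2$ distinct inequalities are redundant, leaving $4mn-3m-3n+5$ candidate facets.

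For step (ii), I would show each surviving inequality is facet-defining by exhibiting $mn$ affinely independent partial alternating sign matrices on its bounding hyperplane, so the corresponding face has dimension $mn-1$ (recall $\dim\pasm(m,n)=mn$ from Remark~\ref{pasm_dim}). For a surviving bound on a partial sum $c_{ij}$ or $r_{ij}$ I would construct an explicit family of vertices attaining it with equality, using the grid-graph viewpoint and the separating-hyperplane/vertex machinery of \cite{SolhjemStriker} that underlies Proposition~\ref{prop:vertpasm}; the freedom to place a $1$, a $0$, or a cancelling $\pm 1$ pair in the remaining positions supplies enough independent matrices.

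I expect step (ii) to be the main obstacle: producing, uniformly over $(i,j)$ and across all four inequality types, a provably affinely independent set of vertices of the correct size is the technical heart of the argument, whereas the redundancy deductions in step (i) are immediate. A secondary but genuine subtlety is the bookkeeping in step (i): the monotonicity redundancies, the single-entry redundancies, and the shared corner constraints overlap (for instance $r_{11}\leq 1$, $c_{11}\leq 1$, and the single-entry bound $X_{11}\leq 1$ are the same inequality), and one must avoid double-counting so that the number of removed inequalities is exactly right and the final count is neither an over- nor an under-estimate of $4mn-3m-3n+5$.
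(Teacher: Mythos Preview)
Your plan matches the paper's. The paper starts from the same $4mn$ inequalities of Theorem~\ref{thm:pasmverts}, derives exactly the monotonicity-based redundancies you describe along the first row and first column (using $c_{1j}=X_{1j}\geq 0$ and $r_{i1}=X_{i1}\geq 0$), and arrives at the same list of $4mn-3m-3n+5$ survivors. The only bookkeeping difference is cosmetic: the paper treats the inequality system as a multiset and removes $3m+3n-5$ items (so the corner coincidences $r_{11}=c_{11}$ are absorbed into that count), whereas you first deduplicate to $4mn-2$ distinct inequalities and then remove $3m+3n-7$; the arithmetic agrees.

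One notable difference: the paper does \emph{not} carry out your step~(ii). After listing the surviving inequalities it simply asserts they are necessary and hence determine the facets, without exhibiting $mn$ affinely independent vertices on each supporting hyperplane. So the ``main obstacle'' you flag is one the paper's proof does not actually address; your proposed argument would be strictly more complete on this point.
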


\begin{proof}
We claim a minimal inequality description is the following. 
\begin{align}
0  &\leq \displaystyle\sum_{i'=1}^{i} X_{i'j}, & \mbox{ for all }  1 \leq i \leq m, \mbox{ } 2 \leq j \leq n, \mbox{ and } i=j=1  \label{ineq1a} \\
\displaystyle\sum_{i'=1}^{i} X_{i'j} & \leq 1, & \mbox{ for all }  2 \leq i \leq m, \mbox{ } 2 \leq j \leq n, \mbox{ and }   i=m,j=1 \label{ineq1b} \\
0 &\leq \displaystyle\sum_{j'=1}^{j} X_{ij'}, & \mbox{ for all }  2 \leq i \leq m, \mbox{ } 1 \leq j \leq n,  \label{ineq2a} \\
 \displaystyle\sum_{j'=1}^{j} X_{ij'} & \leq 1, & \mbox{ for all }  2 \leq i \leq m, \mbox{ } 2 \leq j \leq n, \mbox{ and } i=1,j=n \label{ineq2b} 
\end{align}

We prove this by first showing the additional inequalities from Theorem~\ref{thm:pasmverts} are implied by those listed above. Then we will show inequalities (\ref{ineq1a})--(\ref{ineq2b}) are irredundant.

Note that (\ref{ineq1a})--(\ref{ineq2b}) would be exactly the same $4mn$ inequalities as (\ref{eq:pasmpoly1}) and (\ref{eq:pasmpoly2}) if all the ranges for $i$ and $j$ were $1\leq i\leq m$ and $1\leq j\leq n$. We show how each omitted combination of $i$ and $j$ is implied by other inequalities in (\ref{ineq1a})--(\ref{ineq2b}).

First, we note the inequality of (\ref{ineq1a}) in the case $i=1$ is $0 \leq X_{1j}$. This implies that $0 \leq \sum_{j'=1}^{j} X_{1j'}$ for $1 \leq j \leq n$, which shows why $i=1$ is not included in (\ref{ineq2a}).

When $i=1, j=n$, the inequalities of (\ref{ineq1a}) and (\ref{ineq2b}) give $0 \leq X_{1n}$ and $\sum_{j'=1}^n X_{1j'} \leq 1$, respectively. This implies that $\sum_{j'=1}^{n-1} X_{1j'} \leq 1 - X_{1n} \leq 1$. Similarly, the inequalities of $(\ref{ineq1a})$ when $i=1$ imply the $n-1$ inequalities of the form $\sum_{j'=1}^j X_{1j'} \leq 1$ for $1 \leq j < n$. This shows why $i=1$ is not included in (\ref{ineq2b}) except when $j=n$.

We now use the redundant inequalities shown in the previous two paragraphs: $\sum_{j'=1}^{j-1} X_{1j'} \geq 0$ and $\sum_{j'=1}^j X_{1j'} \leq 1$ for $2 \leq j \leq n$. Together these imply that $X_{1j} \leq 1 - \sum_{j'=1}^{j-1} X_{1j'} \leq 1$ for $2 \leq j \leq n$, so we omit $i=1$ in (\ref{ineq1b}).

The inequality of (\ref{ineq1a}) when $i=j=1$, and the inequality of (\ref{ineq2a}) when $i=2, j=1$, together imply that $0 \leq X_{11}+X_{21}$. Similarly, the inequality of (\ref{ineq2a}) in the case $j=1$ implies that $0 \leq \sum_{i'=1}^{i} X_{i'1}$ for $2 \leq i\leq m$. This is why $j=1$ is omitted in (\ref{ineq1a}) except in the case when $i=1$.

When $i=m, j=1$, the inequalities of (\ref{ineq1b}) and (\ref{ineq2a}) give $\sum_{i'=1}^m X_{i'1} \leq 1$ and $0 \leq X_{m1}$, respectively. This implies that $\sum_{i'=1}^{m-1} X_{i'1} \leq 1 - X_{m1} \leq 1$. Similarly, the inequalities of (\ref{ineq2a}) when $j=1$ imply the $m-1$ inequalities of the form $\sum_{i'=1}^{i} X_{i'1} \leq 1$ for $1 \leq i < m$. This shows why $j=1$ is not included in (\ref{ineq1b}) except when $i=m$. 

We now use the redundant inequalities shown in the previous two paragraphs: $\sum_{i'=1}^{i-1} X_{i'1} \geq 0$ and $\sum_{i'=1}^i X_{i'1} \leq 1$ for $2 \leq i \leq m$. Together these imply that $X_{i1} \leq 1 - \sum_{i'=1}^{i-1} X_{i'1} \leq 1$ for $2 \leq i \leq m$, so we omit $j=1$ in (\ref{ineq2b}).

Overall, this means that the number of facets is at most $4mn - 3m - 3n + 5$, each made by changing one of the inequalities in (\ref{ineq1a})--(\ref{ineq2b}) to an equality. We claim that this upper bound is the facet count. That is, a facet can be defined 
as the set of all $X \in \pasm(m,n)$ which satisfy exactly one of the following:

\begin{align}
0  & = \displaystyle\sum_{i'=1}^{i} X_{i'j}, & \mbox{ for all }  1 \leq i \leq m, \mbox{ } 2 \leq j \leq n, \mbox{ and } i=j=1  \label{eq1a} \\
\displaystyle\sum_{i'=1}^{i} X_{i'j} & = 1, & \mbox{ for all }  2 \leq i \leq m, \mbox{ } 2 \leq j \leq n, \mbox{ and }   i=m,j=1 \label{eq1b} \\
0 & = \displaystyle\sum_{j'=1}^{j} X_{ij'}, & \mbox{ for all }  2 \leq i \leq m, \mbox{ } 1 \leq j \leq n,  \label{eq2a} \\
 \displaystyle\sum_{j'=1}^{j} X_{ij'} & = 1, & \mbox{ for all }  2 \leq i \leq m, \mbox{ } 2 \leq j \leq n, \mbox{ and } i=1,j=n \label{eq2b} 
\end{align}

To show this, let two generic equalities of the form (\ref{eq1a})--(\ref{eq2b}) be denoted as $\alpha_{ij}$ and $\beta_{k\ell}$, where the indices $(i,j)$ and $(k,\ell)$ must be in the corresponding ranges indicated by (\ref{eq1a})--(\ref{eq2b}). In the cases below, we will construct an $m \times n$ partial alternating sign matrix $M$, such that $M$ satisfies $\alpha_{ij}$ and not $\beta_{k\ell}$.

\smallskip
\textbf{\emph{Case 1:}} $\alpha_{ij}$ is an equality in (\ref{eq1a}) or (\ref{eq2a}) and $\beta_{k\ell}$ is an equality in (\ref{eq1b}) or (\ref{eq2b}).
We set $M$ equal to the zero matrix.

\smallskip
In each of the following, we will specify the nonzero entries of $M$, and assume all other entries are zero.

\textbf{\emph{Case 2:}} $\alpha_{ij}$ and $\beta_{k\ell}$ are in (\ref{eq1a}) or (\ref{eq2a}).
\begin{itemize}
\item If $i \neq k$ and $j \neq \ell$ let $M_{k\ell}=1$.
\item
Suppose $\alpha_{ij}$ and $\beta_{k\ell}$ are both in (\ref{eq1a}). If $j \neq \ell$, let $M_{k\ell}=1$. If $j=\ell$ and $i<k$, let $M_{k\ell}=1$. If $j=\ell$ and $i>k$, let $M_{k\ell}=M_{k+1,\ell-1}=1$ and $M_{k+1,\ell}=-1$.
\item
Suppose $\alpha_{ij}$ and $\beta_{k\ell}$ are both in (\ref{eq2a}). If $i \neq k$, let $M_{k\ell}=1$. If $i=k$ and $j<\ell$, let $M_{k\ell}=1$. If $i=k$ and $j>\ell$, let $M_{k\ell}=M_{k-1,\ell+1}=1$ and $M_{k+1,\ell}=-1$.
\item
If $\alpha_{ij}$ is in (\ref{eq2a}) and $\beta_{k\ell}$ is in (\ref{eq1a}), let $M_{1\ell}=1$.
\item
If $\alpha_{ij}$ is in (\ref{eq1a}) and $\beta_{k\ell}$ is in (\ref{eq2a}), let $M_{k1}=1$.
\end{itemize}

\textbf{\emph{Case 3:}}  $\alpha_{ij}$ and $\beta_{k\ell}$ are in (\ref{eq1b}) or (\ref{eq2b}).

\begin{itemize}
\item 
If $i \neq k$ and $j \neq \ell$, let $M_{ij}=1$.
\item
Suppose $\alpha_{ij}$ and $\beta_{k\ell}$ are both in (\ref{eq1b}). If $j \neq \ell$, let $M_{ij}=1$. If $j=\ell$ and $i < k$, let $M_{ij}=M_{i+1,j-1}=1$ and $M_{i+1,j}=-1$ If $j=k$ and $i > \ell$, let $M_{ij} = 0$.
\item
Suppose $\alpha_{ij}$ and $\beta_{k\ell}$ are both in (\ref{eq2b}). If $i \neq k$, let $M_{ij}=1$. If $i=k$ and $j<\ell$, let $M_{ij}=M_{i-1,j+1}=1$ and $M_{i,j+1}=-1$. If $i=k$ and $j>\ell$, let $M_{ij}=1$.
\item
If $\alpha_{ij}$ is in (\ref{eq2b}) and $\beta_{k\ell}$ is in (\ref{eq1b}), let $M_{1j}=1$.
\item
If $\alpha_{ij}$ is in (\ref{eq1b}) and $\beta_{k\ell}$ is in (\ref{eq2b}), let $M_{i1}=1$.
\end{itemize}

\textbf{\emph{Case 4:}}  $\alpha_{ij}$ is in (\ref{eq1b}) or (\ref{eq2b}) and $\beta_{k\ell}$ is in (\ref{eq1a}) or (\ref{eq2a}).

\begin{itemize}
\item If $i \neq k$ and $j \neq \ell$, let $M_{ij}=M_{k\ell}=1$.
\item
Suppose $\alpha_{ij}$ is in (\ref{eq1b}) and $\beta_{k\ell}$ is in (\ref{eq1a}). If $i=k$ and $j \neq \ell$, let $M_{ij}=M_{1\ell}=1$. If $j=\ell$, let $M_{1j}=1$.
\item
Suppose $\alpha_{ij}$ is in (\ref{eq2b}) and $\beta_{k\ell}$ is in (\ref{eq2a}). If $i=k$, let $M_{i1}=1$. If $j=\ell$ and $i \neq k$, let $M_{ij}=M_{k1}=1$.
\item
Suppose $\alpha_{ij}$ is in (\ref{eq2b}) and $\beta_{k\ell}$ is in (\ref{eq1a}). If $i=k$ and $j < \ell$, let $M_{ij}=M_{1\ell}=1$. If $i=k$ and $j>\ell$, let $M_{k\ell}=1$. If $j=\ell$ and $i \leq k$, let $M_{ij}=1$. If $j=\ell$ and $i < k$, let $M_{k\ell}=M_{i1}=1$ .
\item
Suppose $\alpha_{ij}$ is in (\ref{eq1b}) and $\beta_{k\ell}$ is in (\ref{eq2a}).  If $i=k$ and $j\leq \ell$, let $M_{ij}=1$. If $i=k$ and $j>\ell$, let $M_{k\ell}=M_{1j}=0$. If $j=\ell$ and $i<k$, let $M_{ij}=M_{k1}=1$. If $j=\ell$ and $i>k$, let $M_{k\ell}=1$.
\end{itemize}

In each of these cases, $M$ is constructed so that it satisfies $\alpha_{ij}$ but not $\beta_{ij}$, so each of the equalities in (\ref{eq1a})--(\ref{eq2b}) gives rise to a unique facet. Thus there are $4mn - 3m -3n +5$ facets of $\pasm(m,n)$.
\end{proof}

\begin{remark}
\label{unimod}
The above inequality description may make one wonder whether the matrix defining $\pasm(m,n)$ is \emph{totally unimodular}. Consider the case when $m=n=2$. Then there are $3 \times 3$ submatrices with determinant $2$ and $-2$, so the matrix is not totally unimodular.
\end{remark}

\subsection{Face lattice}
\label{subsec:pasmfacelattice}
In this subsection, we characterize the face lattice of $\pasm(m,n)$ in Theorem~\ref{thm:pasm_facelattice}, using \emph{sum-labelings} of the graph $\Gamma(m,n)$ (see Definition~\ref{def:gamma}).

Recall from Remark~\ref{remark:signmatrices} that partial alternating sign matrices are a subset of sign matrices~\cite{SolhjemStriker}. It was shown in~\cite[Theorem 5.3]{SolhjemStriker} that the convex hull of $m\times n$ sign matrices, denoted $P(m,n)$, has inequality description as in Theorem~\ref{thm:pasmverts}, except in (\ref{eq:pasmpoly2}) the $\leq  1$ is not present. More specifically, we have the following relation.

\begin{lemma}
\label{lem:PmnPASMmn}
The polytope $\pasm(m,n)$ is the intersection: 
\[P(m,n)\cap\displaystyle\bigcap_{1 \leq i \leq m, 1 \leq j \leq n}H_{ij},\]
where $H_{ij}$ is the closed halfspace of $m\times n$ real matrices $X=(X_{ij})$ such that $\displaystyle\sum_{i'=1}^{i} X_{i'j}  \leq 1$.
\end{lemma}

We now state some definitions and a lemma that will help prove Theorem~\ref{thm:pasm_facelattice} describing the face lattice of $\pasm(m,n)$. This theorem is analogous to \cite[Theorems 7.15 and 7.16]{SolhjemStriker} which describe the face lattice of $P(m,n)$. The proof is also similar.

Recall $\hat{M}$ from Definition~\ref{def:Xhat}.
\begin{definition}
\label{sum-labeling}
A \emph{basic sum-labeling} of ${\Gamma}_{(m,n)}$ is a labeling of the edges of $\Gamma_{(m,n)}$ with $0$ or $1$ such that the edge labels equal the corresponding edge labels of $\hat{M}$ for some $M\in\pasm_{m,n}$. 
\end{definition}

\begin{remark}
Recall we can recover any matrix from its column partial sums. Thus basic sum-labelings of ${\Gamma}_{(m,n)}$ are in bijection with partial alternating sign matrices $\pasm_{m,n}$. This is a linear isomorphism, so $\pasm(m,n)$ is linearly isomorphic to a $0/1$ polytope.
\end{remark}

\begin{definition}
\label{def:union}
Let $\delta$ and $\delta'$ be labelings of the edges of  $\Gamma_{(m,n)}$ with $\{0\}$, $\{1\}$, or $\{0,1\}$.
Define the \emph{union} $\delta \cup \delta'$  as the labeling of $\Gamma_{(m,n)}$ such that each edge is labeled by the union of the corresponding labels on $\delta$ and $\delta'$. Define  \emph{intersection} $\delta\cap\delta'$ and \emph{containment} $\delta\subseteq \delta'$  similarly. 
\end{definition}

\begin{definition}
A \emph{sum-labeling} $\delta$ of ${\Gamma}_{(m,n)}$ is either the empty labeling of $\Gamma_{(m,n)}$ 
(denoted $\emptyset$) 
or a labeling of the edges of $\Gamma_{(m,n)}$ with $\{0\}$, $\{1\}$, or $\left\{0,1\right\}$ such
that there exists a set $S$ of basic sum-labelings of $\Gamma_{(m,n)}$ so that $\delta = \bigcup_{\delta'\in S} \delta'$.
\end{definition}

\begin{definition} 
Given $M\in\pasm_{m,n}$,
let $g(M)$ denote the basic sum-labeling of $\Gamma_{(m,n)}$ associated to $M$. 
Given a collection of partial alternating sign matrices $\mathcal{M}=\{M_1,M_2,\dots,M_r\}\subseteq\pasm_{m,n}$, 
define the map $g(\mathcal{M})=\displaystyle\bigcup_{i=1}^r g(M_i)$.
\end{definition}

\begin{definition}
\label{def:region}
Given a sum-labeling $\delta$, consider the planar graph $G$ composed of the edges of $\delta$ labeled by the two-element set $\{0,1\}$ (and all incident vertices), where we regard any external edges on the right and bottom as meeting at a point in the exterior.
A \emph{region} of $\delta$ is defined as a planar region of $G$, excluding the exterior region. 
Let $\mathcal{R}(\delta)$ denote the number of regions of $\delta$. (For consistency we set $\mathcal{R}(\emptyset)=-1$.)
\end{definition}

See Figure~\ref{fig:sum-labeling} for an example of a sum-labeling of $\Gamma_{(2,3)}$ with $4$ regions.

\begin{figure}[hbtp]
\centering
\includegraphics[scale=0.5]{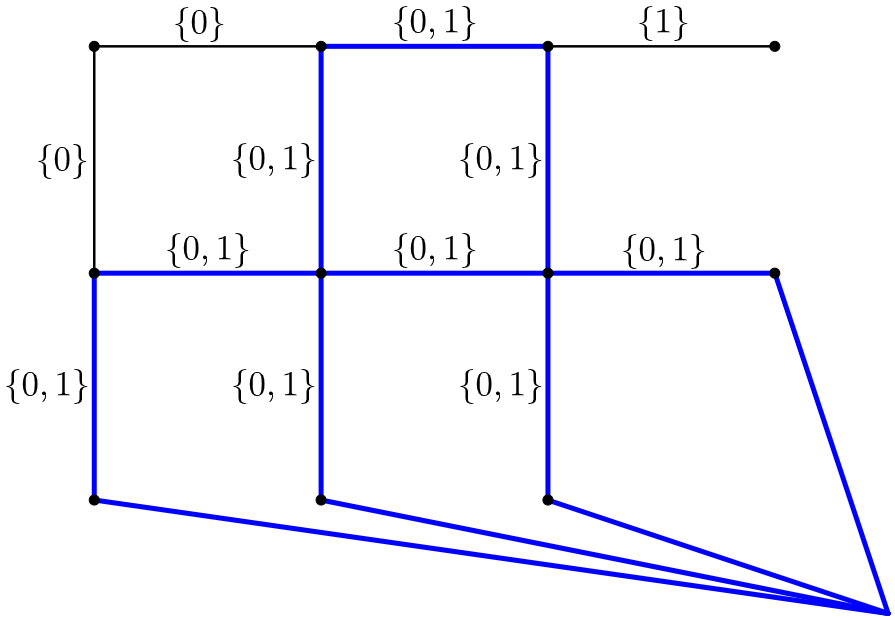}
\caption{The sum-labeling of $\Gamma_{(2,3)}$ which is $g(M_3) \cup g(M_{13}) \cup g(M_{15})$, where $M_3$, $M_{13}$, and $M_{15}$ are as in Examples~\ref{ex:pperms} and \ref{ex:pasms}. Edges labeled $\{0,1\}$ are colored blue to accentuate the regions.}
\label{fig:sum-labeling}
\end{figure}

\begin{lemma}
\label{prop:regions}
Consider sum-labelings $\delta$ and $\delta'$. If $\delta \subset \delta'$ (where $\subset$ denotes strict containment), then $\mathcal{R}(\delta)<\mathcal{R}(\delta')$. 
\end{lemma}

\begin{proof}
By convention, the empty labeling has $\mathcal{R}(\emptyset)=-1$. 
If $\delta$ is a basic sum-labeling, $\mathcal{R}(\delta)=0$, as there are no edges labeled $\{0,1\}$ in a basic sum-labeling. Suppose a sum-labeling $\delta$ has $\mathcal{R}(\delta)=\omega>0$. We wish to show if $\delta\subset\delta'$ then $\mathcal{R}(\delta')> \omega$. Now $\delta\subset\delta'$ implies that the labels of each edge of $\delta$ are subsets of the labels of each edge of $\delta'$, where at least one of these containments is strict. So there is an edge in $\delta'$ labeled $\{0,1\}$ that was labeled $\{0\}$ or $\{1\}$ in $\delta$. So $\delta'$ contains a basic sum labeling $\beta'$ that differs from all the basic sum labelings in $\delta$ at edge $e$. 
Let $\beta$ denote a basic sum labeling such that $\beta\subseteq\delta$.
By Equation~(\ref{eq:rc}), at least one edge label of $\beta'$ adjacent to $e$  must also differ from the corresponding edge label of $\beta$. By iterating this (as in the proof of Theorem~\ref{thm:pasmverts}), $\beta'$ differs from $\beta$ by at least one simple path (connecting boundary vertices) or cycle of differing partial sums. This path or cycle appears as edges labeled by $\{0,1\}$ in $\delta'$, and at least one of these edges was not labeled by $\{0,1\}$ in $\delta$. So $\delta'$ has at least one new region. Therefore, $\mathcal{R}(\delta')> \omega$. 
\end{proof}

We are now ready to state and prove the main theorem of this subsection.
\begin{theorem}
\label{thm:pasm_facelattice}
Let $F$ be a face of $\pasm(m,n)$ and $\mathcal{M}(F)$ be the set of partial alternating sign matrices that are vertices of $F$. The map $\psi:F\mapsto g(\mathcal{M}(F))$ induces an isomorphism between the face lattice of $\pasm(m,n)$ and the set of sum-labelings of ${\Gamma}_{(m,n)}$ ordered by containment. Moreover, 
$\mathrm{dim}(F)= \mathcal{R}(\psi(F))$.
\end{theorem}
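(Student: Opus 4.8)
The plan is to build an explicit inverse to $\psi$, verify it is an order-isomorphism, and then treat the dimension formula separately. For a sum-labeling $\delta$, let $F_\delta$ denote the subset of $\pasm(m,n)$ obtained by imposing the equality $\sum_{i'=1}^{i} X_{i'j}=0$ (resp.\ $=1$) on each edge labeled $0$ (resp.\ $1$) in $\delta$, and likewise for the row partial sums; since these equalities come from the valid inequalities (\ref{eq:pasmpoly1})--(\ref{eq:pasmpoly2}), each $F_\delta$ is a face. First I would record that $\psi$ is well defined, since $g(\mathcal{M}(F))$ is by construction a union of basic sum-labelings, and order-preserving, since $F\subseteq F'$ forces $\mathcal{M}(F)\subseteq\mathcal{M}(F')$ and the union in the definition of $g$ is monotone.

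The core is to show $\Phi\colon\delta\mapsto F_\delta$ inverts $\psi$. For $\psi(F_\delta)=\delta$: writing $\delta=\bigcup_{\beta\in S}\beta$ with each $\beta$ basic, the partial alternating sign matrix corresponding to $\beta$ satisfies all the imposed equalities and so is a vertex of $F_\delta$; hence on every edge labeled $\{0,1\}$ both values $0$ and $1$ occur among the vertices of $F_\delta$, while on the edges labeled $0$ or $1$ all vertices agree with $\delta$, giving $g(\mathcal{M}(F_\delta))=\delta$. For $F_{\psi(F)}=F$: the inclusion $F\subseteq F_{\psi(F)}$ is immediate, since every vertex of $F$, and hence every point of $F$, respects the constant edges of $\psi(F)$. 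For the reverse inclusion I would invoke the standard fact that a face equals the intersection of the facets containing it, together with Theorem~\ref{pasmfacets}, which realizes every facet of $\pasm(m,n)$ as a partial-sum equality; a facet contains $F$ exactly when its partial sum is constant on $F$, that is, at an edge labeled $0$ or $1$ in $\psi(F)$, so $F_{\psi(F)}$ satisfies every such equality and hence $F_{\psi(F)}\subseteq F$. With $\Phi$ an inverse, injectivity and order-reflection follow at once, since $\psi(F)\subseteq\psi(F')$ imposes more equalities on fewer edges and thus $F_{\psi(F)}\subseteq F_{\psi(F')}$; therefore $\psi$ is an order-isomorphism and hence a lattice isomorphism.

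For the dimension statement $\dim F=\mathcal{R}(\psi(F))$, I would work with $F=F_\delta$ and compute the dimension of its affine hull through its tangent space: a direction $D$ is tangent exactly when all partial sums of $D$ vanish on the edges where $\delta$ is constant, while the partial sums along the $\{0,1\}$-labeled subgraph $G$ may vary. Mirroring the path/cycle construction in the proof of Theorem~\ref{thm:pasmverts}, I would associate to each bounded region of $\delta$ an elementary move, namely adding and subtracting along the alternately $(+)/(-)$ labeled corners of the boundary path or cycle of that region, and then show that these moves (i) lie in the tangent space, (ii) are linearly independent, and (iii) span it. Since the regions are exactly the objects counted by $\mathcal{R}(\delta)$, this yields $\dim F_\delta=\mathcal{R}(\delta)$; the base case is a basic sum-labeling, where $F_\delta$ is a single vertex and $\mathcal{R}(\delta)=0$, matching the convention $\mathcal{R}(\emptyset)=-1$ for the empty face. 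As an alternative I could climb the already-established isomorphic lattice from the bottom and show each covering relation adds exactly one region, using Lemma~\ref{prop:regions} to see the increase is at least one and the covering property to rule out more.

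The main obstacle is the dimension count: establishing that the region moves form a basis of the tangent space, both their independence and their spanning. This is precisely where the planarity of $\Gamma_{(m,n)}$ and the relation (\ref{eq:rc}) must be used carefully, and it is the analogue of \cite[Theorem 7.16]{SolhjemStriker}. The extra upper bounds distinguishing $\pasm(m,n)$ from the sign matrix polytope $P(m,n)$ enter only through Lemma~\ref{lem:PmnPASMmn}, so I expect the region-counting argument to transfer, with the boundary conventions adjusted to account for the additional $\leq 1$ constraints.
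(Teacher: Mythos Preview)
Your construction of the inverse $\Phi$ and the verification that $\psi$ and $\Phi$ are mutually inverse order-preserving maps follows essentially the same route as the paper's $\varphi$; the only cosmetic difference is that you verify $F_{\psi(F)}=F$ directly, whereas the paper checks $\psi(\varphi(\nu))=\nu$ and then handles order-preservation and order-reflection separately. (Minor citation point: the fact that every facet is a partial-sum equality is Theorem~\ref{thm:pasmverts}, not Theorem~\ref{pasmfacets}.)

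For the dimension formula, the alternative you mention at the end---climbing maximal chains and using Lemma~\ref{prop:regions} to force the increase to be exactly one region per covering---is precisely the paper's argument. The paper notes that the face lattice is graded of rank $mn$, that the top sum-labeling (all edges labeled $\{0,1\}$) has exactly $mn$ regions, and that Lemma~\ref{prop:regions} gives strict increase along any chain; these three facts together force each covering relation to add exactly one region, so $\dim F=\mathcal{R}(\psi(F))$. Your primary approach via an explicit basis of tangent directions indexed by regions is more ambitious and would give a self-contained proof independent of the grading, but as you correctly flag, establishing independence and spanning is genuinely delicate here (regions need not have simple-cycle boundaries, and the $(+)/(-)$ alternation along an arbitrary region boundary is not automatic), and it is unnecessary given the chain-counting shortcut you already have in hand.
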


\begin{proof}
Let $F$ be a face of $\pasm(m,n)$. Then $g(\mathcal{M}(F))$ is a sum-labeling of ${\Gamma}_{(m,n)}$ since $g(\mathcal{M}(F))=\displaystyle\bigcup_{i=1}^r g(M_i)$ is a union of basic sum-labelings.
We now construct the inverse of $\psi$, call it $\varphi$. Given a sum-labeling $\nu$ of ${\Gamma}_{(m,n)}$, let $\varphi(\nu)$ be the face that results as the intersection of the facets corresponding to the edges of $\nu$ with label $0$ or $1$. 

We wish to show $\psi(\varphi(\nu))=\nu$. First, we show $\nu\subseteq \psi(\varphi(\nu))$. Let $M \in \pasm(m,n)$ such that $g(M) \subset \nu$ is a basic sum-labeling. Then $M$ is in the intersection of the facets that yields $\varphi(\nu)$, since otherwise $g(M)$ would not be a basic sum-labeling such that $g(M) \subset \nu$. Thus $g(M) \subseteq \psi(\varphi(\nu))$ as well. So $\nu\subseteq \psi(\varphi(\nu))$.

Next, we show $\psi(\varphi(\nu)) \subseteq \nu$. Suppose not. Then there exists some edge $e$ of $\Gamma_{(m,n)}$ whose label in $\psi(\varphi(\nu))$  strictly contains the  label of $e$ in $\nu$. The label of $e$ in $\nu$ is $0$ or $1$ and the label of $e$ in $\psi(\varphi(\nu))$ is $\{0,1\}$. Let $\gamma$ denote the label of $e$ in $\nu$. As in the previous case, the facet corresponding to the label $\gamma$ on $e$ would have been one of the facets intersected to get $\varphi(\nu)$. Therefore the matrix partial column sum corresponding to edge $e$ would be fixed as $\gamma$ in each partial alternating sign matrix in $\varphi(\nu)$. So in the union $\psi(\varphi(\nu))$, that edge label would be the union of the edge labels of all the partial alternating sign matrices in $\varphi(\nu)$, and this union would be $\gamma$. This is a contradiction. Thus $\nu=\psi(\varphi(\nu))$.

Let $F_1$ and $F_2$ be faces of $\pasm(m,n)$ such that $F_1 \subset F_2$. Then $F_1$ is an intersection of $F_2$ and some facet hyperplanes. In other words, $F_1$ is obtained from $F_2$ by setting at least one of the inequalities in Theorem~\ref{thm:pasmverts} to an equality. We have that $\psi(F_1)$ is obtained from $\psi(F_2)$ by changing at least edge label of $\{0,1\}$ to a label of $0$ or $1$. Therefore we have $\psi(F_1) \subset \psi(F_2)$.

Conversely, suppose that $\psi(F_1) \subset \psi(F_2)$. Recall the inverse of $\psi$ is $\varphi$, where for any sum-labeling $\nu$ of $\Gamma_{(m,n)}$, $\varphi(\nu)$ is the face of $\pasm(m,n)$ that results as the intersection of the facets corresponding to the edges of $\nu$ with labels $0$ or $1$. Now if $\psi(F_1) \subset \psi(F_2)$, the edges of $\psi(F_1)$ with label $\{0,1\}$ are a subset of such edges of $\psi(F_2)$, so the edges of $\psi(F_2)$ with labels of either $0$ or $1$ are a subset of such edges of $\psi(F_1)$. So $\varphi(\psi(F_1))$ is an intersection of the facets intersected in $\varphi(\psi(F_2))$ and one or more additional facets. Thus $F_1=\varphi(\psi(F_1))\subset \varphi(\psi(F_2))=F_2$.

Now, we prove the dimension claim. Recall from Remark~\ref{pasm_dim} that $\mathrm{dim}(\pasm(m,n))=mn$. Since $\psi$ is a poset isomorphism, $\psi$ maps a maximal chain of faces $F_0 \subset F_1 \subset \cdots \subset F_{mn}$  to the maximal chain $\psi(F_0) \subset \psi(F_1) \subset \cdots \subset \psi(F_{mn})$ in the sum-labelings of ${\Gamma}_{(m,n)}$. The sum-labeling whose labels are all equal to $\{0,1\}$ contains all other sum-labelings, and this sum-labeling has $mn$ regions. Thus the result follows by Lemma~\ref{prop:regions}.
\end{proof}

\begin{remark}
The Birkhoff polytope is not only nice combinatorially, but its face lattice description in terms of matchings represents a fundamental problem in combinatorial optimization.
Though we do not discuss it here, we note that the study of the combinatorial optimization problem corresponding to linear programming on $\pasm(m,n)$ would be worthwhile.
\end{remark}

\subsection{Volume}
\label{subsec:pasmvolume}
The normalized volume of  $\pasm(m,n)$ for small values of $m$ and $n$ is given in Figure~\ref{pasm_vol} (computed in SageMath). Due to the large size of the polytopes, further computations are not easily obtained. Note that there does not appear to be a nice formula for the volume.

\begin{figure}[hbtp]
\begin{tabular}{|c|c|c|c|c|}
\hline
\diaghead(1,-1)%
   {\theadfont nnn}%
   {$m$}{$n$} & 1 & 2   & 3      & 4          \\ \hline
    1     & 1 & 1   & 1      & 1          \\ \hline
    2     & 1 & 6   & 43     & 308        \\ \hline
    3     & 1 & 43  & 5036   & 696658     \\ \hline
    4     & 1 & 308 & 696658 & 3106156252 \\ \hline
\end{tabular}
\caption{The normalized volume of $\pasm(m,n)$ for small values of $m$ and $n$.}
\label{pasm_vol}
\end{figure}

\begin{remark}
We have used SageMath to compute the Ehrhart polynomials for $\pasm(m,n)$ for $m,n \leq 4$ and note that in all of these cases their coefficients are positive.
\end{remark}

\section{Partial permutohedron}
\label{sec:ppermuto}
In this section, we study partial permutohedra that arise naturally as projections of $\pperm(m,n)$ and $\pasm(m,n)$.
After giving the definition, we count vertices and facets and find an inequality description in Subsection~\ref{ppermuto1}. Then in Subsection~\ref{ppermutofacelattice}, we note the relation between the partial permutohedron and the stellohedron and give a new combinatorial description of its face lattice. We show in Subsection~\ref{ppermutoprojection} that partial permutation and partial alternating sign matrix polytopes project to partial permutohedra. Finally, in Subsection~\ref{ppermutovolume}, we give a result and conjecture on volume.

\subsection{Vertices, facets, inequality description}
\label{ppermuto1}
In this subsection, we first give the definition of partial permutohedra. We enumerate the vertices in Proposition~\ref{ppermutoverts} and the facets in Theorem~\ref{permutofacets} and prove an inequality description in Theorem~\ref{thm:permuto_ineq}.
\begin{definition}
Given a partial permutation matrix $M\in P_{m,n}$, its \emph{one-line notation} $w(M)$ is a word $w_1 w_2 \ldots w_m$ where $w_i=j$ if there exists $j$ such that $M_{ij}=1$ and $0$ otherwise.
\end{definition}

\begin{example}
Let $M = \begin{pmatrix} 0 & 0 & 1 & 0 & 0 \\ 0 & 0 & 0 & 0 & 1 \\ 0 & 0 & 0 & 0 & 0 \\ 0 & 1 & 0 & 0 & 0\end{pmatrix}$. Then $w(M) = 3502$.
\end{example}

\begin{proposition}
The set $w(P_{m,n})$ of words of all matrices in $P_{m,n}$ can be characterized as the set of all words of length $m$ whose entries are in $\left\{0,1,\ldots,n\right\}$ and whose nonzero entries are distinct.
\end{proposition}

\begin{proof}
By definition, any matrix in $P_{m,n}$ has $m$ rows and $n$ columns with at most one $1$ in any given row or column. Thus its image under $w$ will be a word of length $m$ with entries in $\{0,1,\ldots,n\}$ such that the nonzero entries are all distinct. It follows from the definition of $w$ that this map is bijective.
\end{proof}

\begin{definition}
\label{def:partialpermutohedron}
Let \emph{$\mathcal{P}(m,n)$} be the polytope defined as the convex hull, as vectors in $\mathbb{R}^{m}$, of the words in $w(P_{m,n})$. 
Call this the \emph{$(m,n)$-partial permutohedron}.
\end{definition}

\begin{remark}
\label{ppermuto_dim}
The dimension of \emph{$\mathcal{P}(m,n)$} is $m$. To see this, let $U_{i}$ be the $m \times n$ matrix with $(i,1)$ entry equal to 1 and zeros elsewhere. Then $w(U_i)$ is the unit vector with $1$ in position $i$ and all other entries equal to $0$. Note that $U_{i} \in \mathcal{P}(m,n)$ for all $1 \leq i \leq m$. Since \emph{$\mathcal{P}(m,n)$} contains each of these $m$ unit vectors, its dimension equals the ambient dimension $m$. 
\end{remark}

\begin{definition}
\label{Pz}
Let $z \in \mathbb{R}^n$ be a vector with distinct nonzero entries.
Define $\phi_z:\mathbb{R}^{m \times n}\rightarrow\mathbb{R}^m$ as $\phi_z(X)=Xz$. Also define $w_z(P_{m,n})$ as the set of all words of length $m$ whose entries are in $\left\{0,z_1,z_2,\ldots,z_n\right\}$ and whose nonzero entries are distinct. Then $\mathcal{P}_z(m,n)$ is the polytope defined as the convex hull, as vectors in $\mathbb{R}^m$, of the words in $w_z(P_{m,n})$.
\end{definition}

Note that we will not use Definition~\ref{Pz} until Section~\ref{ppermutoprojection}, but the upcoming results about the structure of partial permutohedra can also be extended to $\mathcal{P}_z$ polytopes.

\begin{proposition}
\label{ppermutoverts}
The number of vertices of $\mathcal{P}(m,n)$ equals \begin{equation}\displaystyle\sum_{k=\max(m-n,0)}^m \frac{m!}{k!}.\end{equation}
\end{proposition}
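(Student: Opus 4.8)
The plan is to first determine exactly which words in $w(P_{m,n})$ are vertices of $\mathcal{P}(m,n)$, and then count them. Since $\mathcal{P}(m,n)$ is the convex hull of the finite set $w(P_{m,n})$, every vertex is one of these words, so the task is to decide which ones are extreme. I claim the vertices are precisely the \emph{top-justified} words: a word with exactly $t$ nonzero entries is top-justified if its set of nonzero values is $\{n-t+1,\dots,n\}$, the $t$ largest admissible values. For instance, in $\mathcal{P}(2,3)$ the vertices should be $00,\,30,\,03,\,32,\,23$, matching the value $5$ predicted by the formula, whereas the other eight words of $w(P_{2,3})$ are interior.

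To show every top-justified word $w$ is a vertex, I would exhibit a linear functional maximized uniquely at $w$. Setting $c_i = w_i$ at the positions where $w_i>0$ and $c_i=-1$ at the positions where $w_i=0$, the functional $c\cdot x$ penalizes placing any value on a zero-position of $w$ and, on the nonzero positions, is a positively weighted sum whose distinct weights are exactly the top $t$ values. Maximizing $c\cdot w'$ over $w'\in w(P_{m,n})$ then forces $w'$ to vanish on the zero-positions and, by the rearrangement inequality, to place the $t$ largest values in the same order as the weights; since the weights are exactly $w$'s values, the unique maximizer is $w'=w$.

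For the converse, I would show that any non-top-justified word is a proper midpoint of two other words, hence not a vertex. If $w$ has nonzero-value set $T$ with $|T|=t$ and $T\neq\{n-t+1,\dots,n\}$, then letting $a=\min T$ and $s\ge 0$ be largest with $\{a,a+1,\dots,a+s\}\subseteq T$, one checks that $a+s+1\le n$ and $a-1\ge 0$. Shifting this maximal consecutive run of values down by $1$ and up by $1$, leaving all other entries fixed, produces words $w^-$ and $w^+$ with $w=\tfrac12(w^-+w^+)$ and $w^-\neq w^+$. The main obstacle is verifying that this block-shift keeps all nonzero entries distinct and within $\{0,\dots,n\}$ at the two ends of the run; this is exactly where maximality of the run and the choice $a=\min T$ are used, and it is the only genuinely delicate point of the argument.

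Finally, I would count the top-justified words. For each $t$ with $0\le t\le\min(m,n)$ there are $\binom{m}{t}$ choices of positions for the nonzero entries and $t!$ ways to arrange the fixed value set $\{n-t+1,\dots,n\}$ among them, giving $\binom{m}{t}\,t! = \frac{m!}{(m-t)!}$ vertices. Summing over $t$ and reindexing by the number of zeros $k=m-t$, so that $k$ ranges from $\max(m-n,0)$ to $m$, turns $\frac{m!}{(m-t)!}$ into $\frac{m!}{k!}$ and yields $\sum_{k=\max(m-n,0)}^{m}\frac{m!}{k!}$, as claimed.
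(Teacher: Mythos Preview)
Your proposal is correct and follows essentially the same approach as the paper: both identify the vertices of $\mathcal{P}(m,n)$ as precisely the ``top-justified'' words (those whose $m-k$ nonzero entries are exactly $\{n-(m-k)+1,\ldots,n\}$) and then count them via $\binom{m}{t}t!=m!/(m-t)!$ summed over $t$. The paper's proof simply asserts this vertex characterization without justification, whereas you supply both directions---the separating functional for top-justified words and the midpoint decomposition for the rest---so your argument is a strictly more complete version of the same proof.
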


\begin{proof}
The extreme points of $\mathcal{P}(m,n)$ are those whose nonzero entries are maximized. That is, if $k$ is the number of zeros, the $(m-k)$ nonzero entries must be precisely $\{n, n-1, \ldots, n-(m-k)+1\}$. Now, since there are $m$ total entries and $k$ zeros, there are $\frac{m!}{k!}$ distinct vectors whose $m-k$ nonzero elements are maximized.
\end{proof}

For the proof of the next theorem, and for that of Theorem~\ref{pasm-proj}, we need the concept of (weak) majorization \cite{majorization}.

\begin{definition}[\protect{\cite[Definition A.2]{majorization}}] \label{maj}
Let $u$ and $v$ be vectors of length $N$. Then $u \prec_w v$ (that is, $u$ is \emph{weakly majorized} by $v$) if
\begin{equation}
\displaystyle \sum_{i=1}^k u_{[i]} \leq \sum_{i=1}^k v_{[i]}, \mbox{ for all } 1 \leq k \leq N
\end{equation}
where the vector $\left(u_{[1]},u_{[2]},\ldots, u_{[N]}\right)$ is obtained from $u$ by rearranging its components so that they are in decreasing order (and similarly for $v$).
\end{definition}

\begin{proposition}[\protect{\cite[Proposition 4.C.2]{majorization}}] \label{maj_prop}
For vectors $u$ and $v$ of length $n$, $u \prec_w v$ if and only if $u$ lies in the convex hull of the set of all vectors $z$ which have the form $z = \left(\varepsilon_1 v_{\pi(1)},\ldots,\varepsilon_n v_{\pi(n)}\right)$, where $\pi$ is a permutation and each $\varepsilon_i$ is either $0$ or $1$.
\end{proposition}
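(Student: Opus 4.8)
The plan is to prove the two implications separately, writing $Q$ for the convex hull in the statement, i.e. $Q=\mathrm{conv}\{z^{\pi,\varepsilon}\}$ where $z^{\pi,\varepsilon}=(\varepsilon_1 v_{\pi(1)},\dots,\varepsilon_n v_{\pi(n)})$ ranges over all permutations $\pi$ and all $\varepsilon\in\{0,1\}^n$. Throughout I take the entries of $v$ (and hence of every point of $Q$) to be nonnegative, which is the standing hypothesis in \cite{majorization} and holds in every application in this paper; without it the stated equivalence already fails for $n=1$ (e.g. $v=(1)$ gives $Q=[0,1]$ but $\{u:u\prec_w v\}=(-\infty,1]$). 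Since each $z^{\pi,\varepsilon}$ is a coordinate permutation of some $z^{\mathrm{id},\varepsilon}$, the set $Q$ is invariant under permuting coordinates, and by Definition~\ref{maj} the relation $\prec_w$ depends only on the decreasing rearrangement; so I may reduce to the case where $u$ and $v$ are both sorted in decreasing order, and I abbreviate $V_k:=\sum_{i=1}^k v_{[i]}$.

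For the forward direction ($u\in Q\Rightarrow u\prec_w v$) I would use the convexity of the function $\Phi_k(x):=\sum_{i=1}^k x_{[i]}=\max_{|S|=k}\sum_{i\in S}x_i$, which is a maximum of linear functionals and hence convex. Each generator has entries drawn without repetition from $\{v_1,\dots,v_n\}$ with some replaced by $0$; because $v\ge 0$, the $k$ largest entries of $z^{\pi,\varepsilon}$ are bounded by the $k$ largest entries of $v$, so $\Phi_k(z^{\pi,\varepsilon})\le V_k$ for every $k$. Writing any $u\in Q$ as a convex combination $u=\sum_\alpha\lambda_\alpha z^{(\alpha)}$ and applying convexity of $\Phi_k$ gives $\Phi_k(u)\le\sum_\alpha\lambda_\alpha\Phi_k(z^{(\alpha)})\le V_k$, which is exactly $u\prec_w v$.

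For the reverse direction I would argue that the polytope $R:=\{u:u\ge 0,\ \Phi_k(u)\le V_k\text{ for }1\le k\le n\}$ equals $Q$. The forward direction already shows $Q\subseteq R$, so it suffices to show every extreme point of the (bounded, since $0\le u_i\le V_1$) polytope $R$ is a generator, after which $R=\mathrm{conv}(\operatorname{ext}R)\subseteq Q\subseteq R$. At an extreme point $u^*$ with exactly $s$ nonzero entries, the $n-s$ vanishing coordinates supply $n-s$ tight constraints, so $s$ of the constraints $\Phi_k(u)=V_k$ must be tight and independent; since on a neighborhood where the sorted order is stable $\Phi_k$ is the linear form $u_{[1]}+\dots+u_{[k]}$, independence forces the tight indices to be exactly $k=1,\dots,s$, whence $u^*_{[k]}=V_k-V_{k-1}=v_{[k]}$ for $k\le s$ and $u^*_{[k]}=0$ otherwise. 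Thus $u^*$ is a permutation of $(v_{[1]},\dots,v_{[s]},0,\dots,0)$, i.e. a generator $z^{\pi,\varepsilon}$, completing the argument. Alternatively, one can deduce the reverse inclusion from the classical Hardy--Littlewood--P\'olya/Rado theorem that ordinary majorization equals membership in the convex hull of permutations, by first enlarging $u$ to an equal-sum vector $w$ with $u_{[i]}\le w_{[i]}$ and $w\prec v$.

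The main obstacle is the extreme-point analysis in the reverse direction when $v$ or $u^*$ has repeated values: there the decreasing rearrangement is not locally constant, so $\Phi_k$ need not be differentiable at $u^*$ and the tight-and-independent constraint bookkeeping must be redone using the active-set description $\Phi_k(x)=\max_{|S|=k}\sum_{i\in S}x_i$, or circumvented by perturbing $v$ to have distinct positive entries and passing to the limit. Everything else is routine convexity and linear algebra.
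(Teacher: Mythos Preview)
The paper does not supply its own proof of this proposition; it is quoted from \cite{majorization} as a known result and invoked as a black box in the proofs of Theorems~\ref{thm:permuto_ineq}, \ref{pperm-proj}, and \ref{pasm-proj}. So there is nothing in the paper to compare your argument against.

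That said, your sketch is sound and is essentially the standard argument. The forward direction via convexity of $\Phi_k(x)=\max_{|S|=k}\sum_{i\in S}x_i$ is clean and complete. For the reverse direction, your extreme-point analysis is correct in the generic case: once the $n-s$ coordinate constraints $u_i=0$ are tight, the functionals $\Phi_k$ for $k>s$ are linearly dependent on $\Phi_s$ together with those coordinate functionals, so the only way to obtain $s$ further independent active constraints is to have all of $\Phi_1,\dots,\Phi_s$ tight, forcing $u^*_{[k]}=v_{[k]}$ for $k\le s$. You correctly flag the one genuine subtlety (ties in $u^*$ or in $v$), and either of your proposed fixes---perturbing $v$ to have distinct positive entries and passing to a limit, or reducing to the Rado/Hardy--Littlewood--P\'olya description of ordinary majorization after padding $u$ up to an equal-sum vector---is a standard and valid way around it. You are also right to make the nonnegativity hypothesis explicit: the paper's quoted statement suppresses it, but it is present in the original source and holds in every application the paper makes of the proposition.
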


\begin{theorem}
\label{thm:permuto_ineq}
The polytope $\mathcal{P}(m,n)$ consists of all vectors $u\in\mathbb{R}^m$  such that:
\begin{align}
\label{eq:permuto1}
\displaystyle\sum_{i \in S} u_{i} & \leq \binom{n+1}{2}-\binom{n-k+1}{2}, & \mbox{ where } S \subseteq \{1, \ldots, m\}, |S| = k \neq 0 , \mbox{ and } \\
\label{eq:permuto2}
u_i & \geq 0, & \mbox{ for all } 1 \leq i \leq m.
\end{align}
\end{theorem}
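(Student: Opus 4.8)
The plan is to prove both containments separately, as is standard for an inequality description of a convex hull. First I would show that every vertex of $\mathcal{P}(m,n)$ satisfies (\ref{eq:permuto1})--(\ref{eq:permuto2}); since these constraints are all linear and the vertices are the words in $w(P_{m,n})$, any convex combination will also satisfy them, so $\mathcal{P}(m,n)$ is contained in the region described. The nonnegativity (\ref{eq:permuto2}) is immediate since every entry of a word in $w(P_{m,n})$ lies in $\{0,1,\dots,n\}$. For (\ref{eq:permuto1}), fix $S\subseteq[m]$ with $|S|=k$ and observe that the sum $\sum_{i\in S}u_i$ over any word $u\in w(P_{m,n})$ is a sum of $k$ entries, each in $\{0,1,\dots,n\}$ and with the nonzero ones distinct. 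Such a sum is maximized by taking the $k$ largest available distinct values $n,n-1,\dots,n-k+1$, giving $n+(n-1)+\cdots+(n-k+1)=\binom{n+1}{2}-\binom{n-k+1}{2}$, which is exactly the right-hand side.

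For the reverse containment I would lean on the weak majorization machinery just introduced. The key observation is that the right-hand side of (\ref{eq:permuto1}), namely $\binom{n+1}{2}-\binom{n-k+1}{2}=\sum_{i=1}^{k}(n-i+1)$, is precisely the sum of the $k$ largest entries of the reference vector $v:=(n,n-1,\dots,n-m+1,0,\dots,0)\in\mathbb{R}^m$ (padded with zeros if $m>n$). Thus a nonnegative vector $u\in\mathbb{R}^m$ satisfying (\ref{eq:permuto1}) for all $S$ is exactly one whose top-$k$ partial sums are dominated by those of $v$ for every $k$; taking the supremum over all $S$ of size $k$ recovers $\sum_{i=1}^{k}u_{[i]}$, so the inequalities (\ref{eq:permuto1}) are equivalent to $u\prec_w v$. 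By Proposition~\ref{maj_prop}, $u\prec_w v$ holds if and only if $u$ lies in the convex hull of all vectors of the form $(\varepsilon_1 v_{\pi(1)},\dots,\varepsilon_m v_{\pi(m)})$ with $\pi\in S_m$ and each $\varepsilon_i\in\{0,1\}$. It then remains to check that each such extreme vector lies in $\mathcal{P}(m,n)$: its entries are drawn from $\{0,n,n-1,\dots,n-m+1\}$ (or $0$ when the corresponding $\varepsilon_i=0$), and the nonzero ones are distinct since the nonzero entries of $v$ are distinct, so it is indeed a word in $w(P_{m,n})$, hence a vertex of $\mathcal{P}(m,n)$.

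The main subtlety, and the step I would write most carefully, is the equivalence between the family of inequalities (\ref{eq:permuto1}) indexed by all subsets $S$ and the single majorization condition $u\prec_w v$. Given nonnegativity, $\max_{|S|=k}\sum_{i\in S}u_i=\sum_{i=1}^{k}u_{[i]}$, so imposing (\ref{eq:permuto1}) for every $S$ of size $k$ is the same as imposing it for the single worst-case $S$, which bounds the $k$-th decreasing partial sum by $\sum_{i=1}^{k}(n-i+1)=\sum_{i=1}^{k}v_{[i]}$. One must also confirm the boundary behavior: the values $n-i+1$ stay positive only while $i\le n$, and when $m>n$ the later entries of $v$ are zero, so the right-hand side $\binom{n+1}{2}-\binom{n-k+1}{2}$ correctly stops increasing once $k>n$ (where $\binom{n-k+1}{2}$ is taken to be $0$). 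Checking that $v$ as defined has exactly these ordered partial sums, and that Proposition~\ref{maj_prop} applies in the ambient dimension $m$ rather than $n$, closes the argument; everything else is the routine two-way inclusion.
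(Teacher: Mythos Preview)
Your proof is correct and follows essentially the same approach as the paper: both verify the inequalities on the generating words and then, for the reverse containment, invoke the weak-majorization characterization of Proposition~\ref{maj_prop} with the reference vector $v=(n,n-1,\dots)$ to place any feasible $u$ in $\mathcal{P}(m,n)$. You supply more detail than the paper does---in particular the explicit identification $\max_{|S|=k}\sum_{i\in S}u_i=\sum_{i=1}^{k}u_{[i]}$ and the check that every vector $(\varepsilon_1 v_{\pi(1)},\dots,\varepsilon_m v_{\pi(m)})$ lies in $w(P_{m,n})$---but the underlying argument is the same.
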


\begin{proof}
First, note that if $P \in P_{m,n}$, then $w(P)$ satisfies (\ref{eq:permuto1}) and (\ref{eq:permuto2}). This is because the largest values that may appear are the $m$ largest non-negative integers less than or equal to $n$, and the nonzero integers must be distinct. Since $w(P)$ satisfies the inequalities for any $P$, so must any convex combination.

Now, suppose $x \in \mathbb{R}^m$ satisfies (\ref{eq:permuto1}) and (\ref{eq:permuto2}). We will proceed by using Proposition~\ref{maj_prop}. Fix $n$ and let $v = (n, n-1, n-2, \ldots, 1, 0, \ldots, 0)$ be the decreasing vector in $\mathbb{R}^m$ whose largest entry is $n$, and whose subsequent nonzero entries decrease by $1$ and for which all other entries are $0$. Note that if $n \geq  m$, then $v$ will have no $0$ entries: it will be $(n, n-1, \ldots, n-m+1)$. Since $x$ satisfies (\ref{eq:permuto1}) and (\ref{eq:permuto2}), it is by definition weakly majorized by $v$; note in particular that (\ref{eq:permuto1}) requires that the sum of the $k$ largest entries is never more than the $k$ largest integers less than or equal to $n$. But now the convex hull described in Proposition~\ref{maj_prop} is actually $\mathcal{P}(m,n)$, thus $x \in \mathcal{P}(m,n)$.
\end{proof}

\begin{theorem} \label{permutofacets}
The number of facets of $\mathcal{P}(m,n)$ equals $m + 2^m -1 - \displaystyle\sum_{r=1}^{m-n}\binom{m}{m-r}$.
\end{theorem}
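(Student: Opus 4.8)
The plan is to count the number of facet-defining inequalities among those in Theorem~\ref{thm:permuto_ineq}. The inequality description gives $m$ nonnegativity constraints from (\ref{eq:permuto2}) and $2^m - 1$ subset-sum constraints from (\ref{eq:permuto1}), one for each nonempty $S \subseteq [m]$. I would first argue that the full count $m + (2^m - 1)$ is an overcount precisely because some of the subset-sum inequalities are redundant, and the redundant ones are exactly those indexed by subsets $S$ of size $k > n$. Indeed, the right-hand side of (\ref{eq:permuto1}) is $\binom{n+1}{2} - \binom{n-k+1}{2} = n + (n-1) + \cdots + (n-k+1)$, the sum of the $k$ largest available nonzero values; but when $k > n$ there are only $n$ positive values available, so a vertex word has at most $n$ nonzero entries, and the constraint for such an oversized $S$ is implied by (and coincides with) the constraint for the size-$n$ subset together with nonnegativity. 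Thus the subsets $S$ with $|S| = k$ for $n < k \leq m$ contribute no new facets.

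Concretely, I would show that for $|S| = k > n$, the bound $\sum_{i\in S} u_i \le \binom{n+1}{2}$ is the same numerical value as for $|S'| = n$ (since $\binom{n-k+1}{2}=\binom{n+1}{2}$ forces the RHS to saturate at $\binom{n+1}{2}$ once $k \ge n$), and is dominated by combining a size-$n$ constraint on an $n$-subset of $S$ with the nonnegativity of the remaining $k-n$ coordinates. Hence every subset-sum inequality with $|S| > n$ is redundant. The number of such redundant subsets is $\sum_{k=n+1}^{m} \binom{m}{k}$. Reindexing with $r = m - k$ (so $k = m - r$ and $k$ ranging from $n+1$ to $m$ corresponds to $r$ ranging from $0$ to $m - n - 1$), or more cleanly matching the stated formula by writing $\sum_{r=1}^{m-n}\binom{m}{m-r}$ for the subsets of size $k = m-r$ with $r$ from $1$ to $m-n$, I would verify the count of redundancies agrees with $\displaystyle\sum_{r=1}^{m-n}\binom{m}{m-r}$. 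This bookkeeping step is where the binomial reindexing must be handled carefully so the final expression matches the theorem exactly.

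After removing redundant subsets, the remaining candidate facets are the $m$ nonnegativity inequalities together with the subset-sum inequalities for all nonempty $S$ with $|S| \le n$, totaling $m + 2^m - 1 - \sum_{r=1}^{m-n}\binom{m}{m-r}$. The final task is to confirm that each of these surviving inequalities is genuinely facet-defining, i.e.\ that none of the nonredundant subset-sum constraints nor the nonnegativity constraints is itself implied by the others. For this I would exhibit, for each surviving inequality, a collection of affinely independent vertices of $\mathcal{P}(m,n)$ (words in $w(P_{m,n})$) that lie on the corresponding hyperplane and span an $(m-1)$-dimensional affine subspace; since the polytope is full-dimensional in $\mathbb{R}^m$ (its vertices include all permutations of $(n, n-1, \ldots, n-m+1)$ and related truncations, which affinely span $\mathbb{R}^m$), an inequality is a facet iff its tight vertex set has affine dimension $m-1$.

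The main obstacle I anticipate is this last verification that the surviving subset-sum inequalities are all facets and mutually irredundant: it requires producing enough tight vertices for each subset $S$ with $|S| \le n$, and checking that distinct subsets yield genuinely distinct (non-implied) supporting hyperplanes. The redundancy-counting part is essentially a clean combinatorial identity and the reindexing of the binomial sum, but establishing facet-ness rigorously — rather than merely identifying candidate inequalities — is the step most likely to need careful construction of vertex configurations and an affine-independence argument.
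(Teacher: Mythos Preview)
Your overall strategy matches the paper's: count the $m+(2^m-1)$ inequalities in Theorem~\ref{thm:permuto_ineq} and remove those that are redundant. However, you have identified the wrong redundant inequalities, because the implication you invoke runs in the wrong direction.

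You claim that for $|S|=k>n$ the constraint $\sum_{i\in S}u_i\le\binom{n+1}{2}$ is ``dominated by combining a size-$n$ constraint on an $n$-subset of $S$ with the nonnegativity of the remaining $k-n$ coordinates.'' But if $S'\subset S$ with $|S'|=n$, then $\sum_{i\in S'}u_i\le\binom{n+1}{2}$ together with $u_j\ge 0$ for $j\in S\setminus S'$ only gives $\sum_{i\in S}u_i\ge\sum_{i\in S'}u_i$, which yields no upper bound on $\sum_{i\in S}u_i$. The correct implication goes from larger sets to smaller ones: once $k\ge n$ the right-hand side is constantly $\binom{n+1}{2}$, and the full-set inequality $\sum_{i\in[m]}u_i\le\binom{n+1}{2}$ together with $u_j\ge 0$ for $j\notin S$ gives $\sum_{i\in S}u_i\le\binom{n+1}{2}$ for every proper $S$ with $|S|\ge n$. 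Thus the redundant sizes are $k=n,n+1,\ldots,m-1$, not $k=n+1,\ldots,m$; the full-set constraint $k=m$ is genuinely a facet. A quick check in $\mathcal{P}(3,2)$ confirms this: $u_1+u_2+u_3\le 3$ is tight on the six vertices with entries a permutation of $(2,1,0)$, while each pair constraint $u_i+u_j\le 3$ is implied by it together with $u_\ell\ge 0$.

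This also explains why your reindexing will not close: $\sum_{r=1}^{m-n}\binom{m}{m-r}$ counts subsets of sizes $n,\ldots,m-1$, whereas your proposed redundant set (sizes $n+1,\ldots,m$) has cardinality $\sum_{k=n+1}^{m}\binom{m}{k}$, and these two sums differ by $\binom{m}{n}-1$. Once you swap to the correct redundant range, the count matches the theorem directly and the remaining verification (that the surviving inequalities are irredundant) proceeds as you outlined.
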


\begin{proof}
There are $2^m-1$ total inequalities given in (\ref{eq:permuto1}), and $m$ inequalities given in (\ref{eq:permuto2}). Note that $\binom{n-k+1}{2}=0$ whenever $k\geq n$. When $m>n$, there are $m-n+1$ values of $k$ such that $\binom{n-k+1}{2}=0$, creating redundancies.
For each $r$ between $1$ and $m-n$, we have redundant inequalities for the subsets of $\{1,\ldots,m\}$ of size $m-r$. These are counted by $\binom{m}{m-r}$. 

When $m \leq n$, none of the inequalities in (\ref{eq:permuto1}) are redundant, since $\binom{n-k+1}{2} = 0$ may only be satisfied by $k=n$.  
\end{proof}

\begin{remark}
When $m \geq n$, the number of facets of $\mathcal{P}(m,n)$ can also be written as: \[m + \displaystyle\sum_{r = m-n+1}^m \binom{m}{m-r}.\]
\end{remark}

\subsection{Face lattice}
\label{ppermutofacelattice}
In this subsection, we give a combinatorial description of the face lattice of $\mathcal{P}(m,m)$ in Theorem~\ref{chain-face-lattice} involving chains in the Boolean lattice. We furthermore state Conjecture~\ref{mn-chain-face}, which extends this characterization to $m\neq n$.

We begin by relating $\mathcal{P}(m,m)$ to a specific graph associahedron, the stellohedron. But first, we need the following definitions.
\begin{definition}[\protect{\cite[Definition 2.2]{Devadoss1}}] \label{tubes}

Let $G$ be a connected graph. A \emph{tube} is a proper nonempty set of vertices of $G$ whose induced graph is a proper, connected subgraph of $G$. There are three ways that two tubes $t_1$ and $t_2$ may interact on the graph:
\begin{enumerate}
\item Tubes are \emph{nested} if $t_1 \subset t_2$.
\item Tubes \emph{intersect} if $t_1 \cap t_2 \neq \emptyset$, $t_1 \not\subset t_2$, and $t_2 \not\subset t_1$.
\item Tubes are \emph{adjacent} if $t_1 \cap t_2 = \emptyset$ and $t_1 \cup t_2$ is a tube in $G$.
\end{enumerate}

Tubes are \emph{compatible} if they do not intersect and they are not adjacent. A \emph{tubing $T$} of $G$ is a set of tubes of $G$ such that every pair of tubes is compatible. A \emph{$k$--tubing} is a tubing with $k$ tubes.
\end{definition}

\begin{definition}[\protect{\cite[Definition 2]{Devadoss2}}] \label{graph-assoc}
For a graph $G$, the \emph{graph associahedron} $\text{Assoc}(G)$ is a simple, convex polytope whose face poset is isomorphic to the set of tubings of $G$, ordered such that $T < T'$ if $T$ obtained from $T'$ by adding tubes.
\end{definition}

Of particular interest to us is the graph associahedron of the star graph, $\text{Assoc}(K_{1,m})$, also called the \emph{stellohedron}.

\begin{definition}
The \emph{star graph} (with $m+1$ vertices) is the complete bipartite graph $K_{1,m}$. We label the lone vertex $*$, and call it the \emph{inner vertex}. We label the other $m$ vertices $x_1, x_2, \ldots, x_m$, and call them \emph{outer vertices}.
\end{definition}

\begin{remark}
Note that if $G$ has $n$ nodes, vertices of $\text{Assoc}(G)$ correspond to maximal tubings of $G$ (\emph{i.e.} $(n-1)$--tubings), and in general, faces of dimension $k$ correspond to $(n-k-1)$-tubings of $G$. Thus for the star graph $K_{1,m}$, which has $m+1$ nodes, vertices of $\text{Assoc}(K_{1,m})$ correspond to $m$-tubings, and in general, faces of dimension $k$ correspond to $(m-k)$--tubings.
\end{remark}

We examine the polytope $\text{Assoc}(K_{1,m})$ through the lens of partial permutations, which allows us to understand it in a different way. Lemmas~\ref{tubings-chains-bijection} and \ref{chain-contain} and Corollary~\ref{chain-ranks}, which culminate in Theorem~\ref{chain-face-lattice}, shed light on a way to view these tubings, and thus the faces of the stellohedron, as certain chains in the Boolean lattice. Furthermore, in Conjecture~\ref{mn-chain-face} we describe what we think happens for $\mathcal{P}(m,n)$, where $m\neq n$. But first, we review the following result that relates $\mathcal{P}(m,m)$ to the stellohedron; this can be found, in other language, in \cite{Manneville-Pilaud}. See also \cite{RookMonoid}, which gives connections to representation theory.

\begin{theorem}[\protect{\cite[Proposition 56]{Manneville-Pilaud}}]
The polytope $\mathcal{P}(m,m)$ is a realization of $\text{Assoc}(K_{1,m})$.
\end{theorem}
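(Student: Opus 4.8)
The plan is to deduce this from the realization of the stellohedron constructed in \cite{Manneville-Pilaud} rather than reprove the polytopality of graph associahedra from scratch: I would set up the explicit dictionary between $\mathcal{P}(m,m)$ and $\text{Assoc}(K_{1,m})$ promised in the following remark and check that it is an affine (hence combinatorial) isomorphism. As a first sanity check that the combinatorial types can agree, I would compare the coarse numerical data already available. Setting $n=m$ in Proposition~\ref{ppermutoverts} and Theorem~\ref{permutofacets}, the correction term $\sum_{r=1}^{m-n}\binom{m}{m-r}$ is an empty sum, so $\mathcal{P}(m,m)$ has $\sum_{k=0}^{m}\frac{m!}{k!}$ vertices and exactly $m+2^m-1$ facets. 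On the tubing side, a facet of $\text{Assoc}(K_{1,m})$ is a single tube of $K_{1,m}$; by Definition~\ref{tubes} the tubes are the $m$ leaf-singletons $\{x_i\}$ together with the center-tubes $\{*\}\cup S$ for the $2^m-1$ proper subsets $S\subsetneq[m]$, giving $m+2^m-1$ tubes, and the vertex count is recovered by enumerating maximal tubings (for instance the cases $m=1,2$ give $2$ and $5$, matching the segment and the pentagon).

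The substance of the proof is the explicit correspondence. On facets I would match the $m$ inequalities $u_i\geq 0$ of Theorem~\ref{thm:permuto_ineq} with the $m$ leaf-singletons $\{x_i\}$, and the $2^m-1$ inequalities $\sum_{i\in S}u_i\leq \binom{m+1}{2}-\binom{m-|S|+1}{2}$, indexed by nonempty $S\subseteq[m]$, with the $2^m-1$ center-tubes, the natural bijection being complementation $S\mapsto\{*\}\cup([m]\setminus S)$ (so that nonempty $S$ corresponds to a proper center-tube and $S=[m]$ to $\{*\}$). On vertices I would match each maximal tubing with the vertex of $\mathcal{P}(m,m)$ described in Proposition~\ref{ppermutoverts}, whose nonzero entries are the top values $\{m,m-1,\ldots,m-\ell+1\}$ distributed among $\ell$ of the $m$ coordinates: the nested chain of center-tubes records which coordinates carry the largest values, and the compatible leaf tubes pin down their relative order. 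I would then verify that this data is induced by a single affine isomorphism of $\mathbb{R}^m$ carrying the facet hyperplanes of Theorem~\ref{thm:permuto_ineq} to those of the Manneville--Pilaud realization; since an affine bijection taking facets to facets and vertices to vertices is an affine equivalence of polytopes, this identifies $\mathcal{P}(m,m)$ as a realization of $\text{Assoc}(K_{1,m})$.

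The main obstacle is the bookkeeping of this dictionary: translating faithfully between the tube/tubing language of Definition~\ref{tubes} and the partial-permutation-word coordinates, and in particular confirming the facet-to-tube correspondence (including the complementation) by comparing vertex--facet incidences on both sides. Once the facets are matched, the two face lattices agree for free, because a face of either polytope is determined by the set of facets containing it, so the affine map automatically carries faces to faces of the same dimension; the finer structural description of these faces as chains in $\mathcal{B}_m$ is then postponed to Theorem~\ref{chain-face-lattice}.
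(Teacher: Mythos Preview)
The paper does not prove this statement; it is quoted directly as \cite[Proposition~56]{Manneville-Pilaud}.  The only material the paper adds is the remark immediately following, which writes down the bijection from maximal tubings $T$ of $K_{1,m}$ to vertices of $\mathcal{P}(m,m)$ via $T\mapsto(|t_1|-1,\ldots,|t_m|-1)$ and checks that the image is indeed a vertex.  So there is no ``paper's own proof'' to compare against: your proposal goes considerably further than the paper does.

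Your outline is essentially correct as an independent argument.  The facet and vertex counts match, and the facet-to-tube dictionary you propose---leaf singletons $\{x_i\}\leftrightarrow\{u_i\ge 0\}$ and center tubes $\{*\}\cup([m]\setminus S)\leftrightarrow\{\sum_{i\in S}u_i\le\binom{m+1}{2}-\binom{m-|S|+1}{2}\}$ via complementation---is the right one; one checks it respects vertex--facet incidences using the vertex map in the paper's remark, and that alone yields an isomorphism of face lattices.  One caution: you phrase the final step as exhibiting an \emph{affine} isomorphism to ``the Manneville--Pilaud realization.''  The theorem only asserts that $\mathcal{P}(m,m)$ \emph{is a} realization of $\text{Assoc}(K_{1,m})$, i.e.\ has the correct face lattice; it does not claim affine equivalence with any particular realization, and distinct realizations of a combinatorial polytope need not be affinely related.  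The incidence-preserving bijection on facets is already sufficient, so the affine step is unnecessary and could fail depending on which coordinates \cite{Manneville-Pilaud} actually use.
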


We describe the explicit map for vertices in the remark below.

\begin{remark}
\label{remark:tubes}
The map which sends maximal tubings of $K_{1,m}$ to the vertices of $\mathcal{P}(m,m)$ is as follows. Let $T$ be a maximal tubing of $K_{1,m}$, and for each outer vertex $x_i$, let $t_i$ be the smallest tube containing $x_i$. Then the coordinate in $\mathbb{R}^m$ corresponding to $T$ is $\left(|t_1|-1, |t_2|-1,\ldots,|t_m|-1\right)$. Note that two tubes of the star graph are compatible only if they each contain a single outer vertex and do not contain $*$, or one is contained in the other. So a maximal tubing will have $r$ tubes which are singleton outer vertices and nested tubes of each size from $r+1$ to $m+1$. Moreover, the tube of size $r+1$ must contain each of the $r$ singleton outer vertices along with the inner vertex. Thus such a tubing gets mapped to a coordinate in $\mathbb{R}^m$ with $r$ zeros and whose nonzero entries are $\{m, m-1, \ldots, r+1\}$, which is a vertex of $\mathcal{P}(m,m)$.
\end{remark}

One can view a tubing instead as its corresponding spine, defined below. This will help in our goal of describing a bijection between tubings of the star graph and chains in the Boolean lattice.

\begin{definition} \label{spine} Let $T$ be a tubing of the star graph. The \emph{spine} of $T$ is the poset of tubes of $T$ ordered by inclusion, whose elements are labeled not by the tubes themselves but by the set of new vertices in each tube. For simplicity, we will use the label $i$ in place of $x_i$.
\end{definition}
Spines are defined (in more generality) in \cite[Remark 10]{Manneville-Pilaud} and are called $B$-trees in \cite[Definition 7.7]{Postnikov2}. See Figure~\ref{tubing_spine_chain} for examples of tubings with their corresponding spines, as well as their corresponding chains from the bijection in the following lemma. The \emph{Boolean lattice} $\mathcal{B}_m$ is the poset of all subsets of $\{1,\ldots, m\}$, ordered by inclusion.

\begin{lemma}\label{tubings-chains-bijection}
Tubings of $K_{1,m}$ are in bijection with chains in the Boolean lattice $\mathcal{B}_m$.
\end{lemma}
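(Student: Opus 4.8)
The plan is to set up the bijection explicitly in both directions and then verify it is well-defined and mutually inverse. A tubing $T$ of $K_{1,m}$ is naturally encoded by its spine (Definition~\ref{spine}), which is a poset of tubes ordered by inclusion, with each tube labeled by its set of new outer-vertex indices. The key structural observation, recorded in the remark on the explicit map, is that two tubes of the star graph are compatible only if each is a singleton outer vertex not containing $*$, or one is nested in the other. Consequently the tubes of $T$ that contain the inner vertex $*$ form a chain under inclusion, and the remaining tubes are singleton outer vertices. I would exploit this: the tubes containing $*$, read from smallest to largest and recording only the outer-vertex indices, give a strictly increasing sequence of subsets of $[m]$, i.e. a chain in $\mathcal{B}_m$.

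First I would describe the forward map $T \mapsto C$. Given a tubing $T$, list the tubes containing $*$ in increasing order $t_1 \subsetneq t_2 \subsetneq \cdots \subsetneq t_p$ (the inner vertex $*$ lies in every such tube, and since tubes are proper, the largest omits at least one vertex). For each such tube $t_a$, let $S_a = t_a \setminus \{*\}$ be the set of outer-vertex indices it contains; then $S_1 \subsetneq S_2 \subsetneq \cdots \subsetneq S_p$ is a chain in $\mathcal{B}_m$. The singleton outer-vertex tubes contribute nothing new, since any such singleton $\{x_i\}$ must be compatible with all the $t_a$, forcing $i$ to already be recorded once $x_i$ is absorbed into the chain (or to be one of the outer vertices never entering a $*$-tube). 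I would take $C$ to be this chain $\emptyset \neq S_1 \subsetneq \cdots \subsetneq S_p$ (allowing the empty chain when no tube contains $*$, corresponding to the tubing of $m$ singleton outer vertices).

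For the inverse, given a chain $S_1 \subsetneq \cdots \subsetneq S_p$ in $\mathcal{B}_m$, I would reconstruct the tubing by declaring the $*$-containing tubes to be $\{*\} \cup S_a$ for each $a$, and adding singleton tubes $\{x_i\}$ for each outer vertex $i$ that needs to be made present as its own tube to complete a compatible collection. I would then check that this collection is pairwise compatible (nesting among the $\{*\}\cup S_a$ follows from the chain condition; singletons are compatible with everything they are nested in) and that the two maps invert one another, which is a direct unwinding of the definitions.

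The main obstacle I expect is bookkeeping around the singleton outer-vertex tubes and the boundary conventions: I must pin down exactly which singletons belong to the tubing associated to a given chain, and confirm that this choice is forced (so the map is well-defined and injective) rather than a free choice. In particular I would need to argue carefully that an outer vertex $x_i$ either enters the $*$-chain at some stage or appears as a singleton tube, but not both in a way that would break compatibility, and to handle the extreme cases (the empty chain, and chains whose top set is all of $[m]$, which cannot occur since tubes are proper). Once the singleton conventions are fixed, verifying the bijection is routine, so I would front-load the argument with a clean statement of which tubes a tubing of $K_{1,m}$ can contain and devote most of the proof to that structural lemma.
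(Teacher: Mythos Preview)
There is a genuine gap: your forward map is not injective. It records only the $*$-containing tubes $t_1\subsetneq\cdots\subsetneq t_p$ via $S_a=t_a\setminus\{*\}$ and discards the singleton outer-vertex tubes, but those singletons are free data, not forced. For a fixed chain of $*$-tubes, any subset of $t_1\setminus\{*\}$ (or any subset of $\{x_1,\dots,x_m\}$ when $p=0$) can occur as the set of singleton tubes: a singleton $\{x_i\}$ is compatible with every $t_a$ precisely when $x_i\in t_1$, and distinct singletons are always compatible with one another (their union is disconnected). Concretely, for $m=2$ the tubings $\{\{*,x_2\}\}$ and $\{\{x_2\},\{*,x_2\}\}$ are both valid and share the unique $*$-tube $\{*,x_2\}$, so your map sends both to the one-element chain $\{2\}$. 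Likewise every tubing consisting solely of singletons---there are $2^m$ of them---collapses to the empty chain under your map. The hope that the singleton choice ``is forced'' is therefore false, and no amount of bookkeeping at the inverse step can repair this. A cardinality check confirms it: $K_{1,2}$ has $11$ tubings, while your intended target (chains whose top element is a proper subset of $[2]$, since tubes are proper) has only $6$.

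The paper's bijection encodes the singleton information in the chain itself, and is a genuinely different map from yours. Reading the spine from the $*$-node upward through the implicit root, the bottom subset of the chain consists of the outer vertices grouped with $*$---those lying in the smallest $*$-tube but \emph{not} appearing as singleton tubes---and the top subset is $[m]$ minus the set of singleton indices. Thus the singletons are recovered as the complement of the maximal chain element, and chains with top equal to $[m]$ do occur (exactly when there are no singleton tubes, as in the third line of Figure~\ref{tubing_spine_chain}). In your two-tubing example above, the paper's map sends $\{\{*,x_2\}\}$ to $\{2\}\subset\{1,2\}$ and $\{\{x_2\},\{*,x_2\}\}$ to $\emptyset\subset\{1\}$, distinguishing them. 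To salvage your approach you would have to carry the singleton set alongside the chain of $S_a$'s, at which point you are essentially rebuilding the paper's construction.
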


\begin{proof} 
Given a spine $S$ of a tubing $T$ of $K_{1,m}$, we can construct the corresponding chain in the Boolean lattice as follows. The bottom element of the chain is the subset including anything that is grouped with $*$ in $S$. Each subsequent subset is made by adding in the elements in the next level of $S$, until we reach the top level. As mentioned in Remark~\ref{remark:tubes}, once we reach the first tube containing $*$, we have nested tubes. So the subsets are nested, resulting in a chain in $\mathcal{B}_m$. Any elements not used in the subsets of the chain will be those that appear below the $*$ in $S$. 

Starting with a chain $C \in \mathcal{B}_m$, we can obtain the corresponding spine $S$ (and thus the tubing) by reversing this process. Any elements not in the maximal subset of $C$ will be in the bottom level of $S$ as singletons. Any elements in the minimal chain of $C$ will appear with $*$ in $S$. The new elements that appear in each subsequent subset in $C$ appear together as a new level in $S$. Once we have $S$, we can, of course, recover $T$.
\end{proof}

\begin{figure}[hbtp]
\centering
$\begin{array}{ccccl}
\includegraphics[scale=0.45, valign=c]{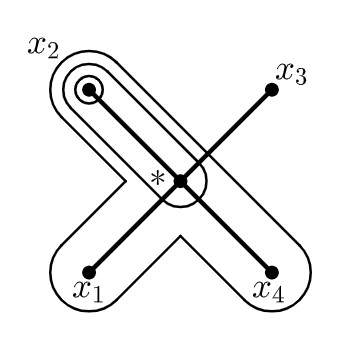} &\longleftrightarrow &\includegraphics[scale=0.45, valign=c]{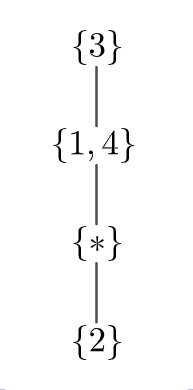} &\longleftrightarrow &\emptyset \subset \{1,4\} \subset \{1,3,4\}\\
\includegraphics[scale=0.45, valign=c]{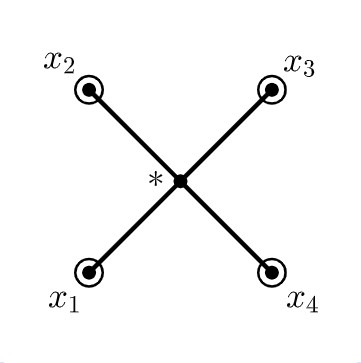} &\longleftrightarrow &\includegraphics[scale=0.45, valign=c]{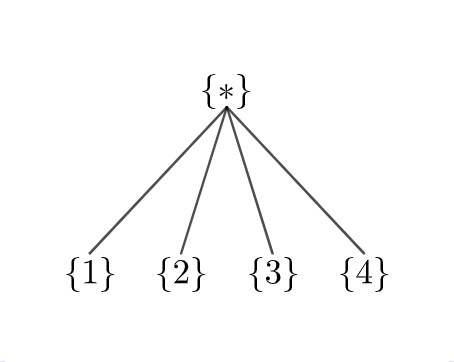} &\longleftrightarrow &\emptyset\\
\includegraphics[scale=0.45, valign=c]{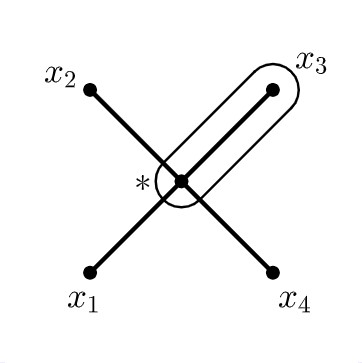} &\longleftrightarrow &\includegraphics[scale=0.45, valign=c]{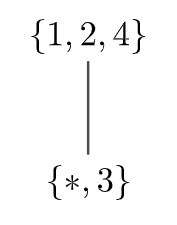} &\longleftrightarrow &\{3\} \subset \{1,2,3,4\}
\end{array}$
\caption{Examples of tubings of $K_{1,4}$ along with their corresponding spines (see Definition~\ref{spine}) and chains in $\mathcal{B}_4$ (via the bijection in Lemma~\ref{tubings-chains-bijection})}
\label{tubing_spine_chain}
\end{figure}

\begin{lemma}\label{chain-contain}
Let $T$ be a $k$-tubing and $T'$ be a $(k+j)$-tubing of $K_{1,m}$, and let $C$ and $C'$ be their corresponding chains in $\mathcal{B}_m$ via the bijection in Lemma~\ref{tubings-chains-bijection}. Then $T \subset T'$ if and only if $C'$ can be obtained from $C$ by $j$ iterations of the following:
\begin{enumerate}
\item adding a non-maximal subset, or
\item removing the same element from every subset.
\end{enumerate}
\end{lemma}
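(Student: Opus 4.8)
The plan is to unwind both the tubing containment $T\subset T'$ and the chain operations (1)--(2) through the spine picture of Definition~\ref{spine}, translating each into the same combinatorial move. The key observation is that the bijection of Lemma~\ref{tubings-chains-bijection} lets me read a chain in $\mathcal{B}_m$ directly off the spine: the new vertices at each level of the spine (above the level containing $*$) record the successive differences between consecutive subsets of the chain, while the vertices grouped with $*$ form the minimal subset, and the singleton outer vertices sitting below $*$ are exactly the elements omitted from the maximal subset. So I would first carefully record this dictionary and note that adding a single tube to a tubing corresponds, in the spine, to either splitting one level into two (refining the poset by inclusion) or detaching a singleton outer vertex onto its own bottom level.

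Next I would prove the ``only if'' direction. Since $T\subset T'$ means $T'$ is obtained from $T$ by adding $j$ tubes one at a time, it suffices to handle a single added tube and then iterate. A newly added tube $t$ is compatible with all tubes of $T$, so by the compatibility description for the star graph (recalled in the remark after the Manneville--Pilaud theorem) it is either a singleton outer vertex, or a tube comparable by inclusion to each existing tube. In the first case, $t$ introduces a new element as a singleton in the bottom level of the spine, i.e.\ it removes that element from the maximal subset of the chain; tracing through the dictionary, this is precisely operation (1), adding a new non-maximal subset to $C$ (the element now appears one step lower). In the second case, $t$ fits strictly between two consecutive tubes of $T$, refining one inclusion step of the spine into two; this inserts one new subset into the chain $C$ strictly between two existing subsets, which is again operation (1) — unless $t$ sits below the level containing $*$, in which case $t$ enlarges the collection grouped with $*$, and the dictionary shows this shrinks the minimal subset by deleting one element from every subset of the chain, which is operation (2). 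So each added tube realizes exactly one iteration of (1) or (2), proving the forward direction.

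For the ``if'' direction I would run the same dictionary in reverse: starting from $C$ and its spine/tubing $T$, I show that each application of (1) or (2) corresponds to adjoining exactly one tube to $T$ and yields a valid tubing. Adding a non-maximal subset $A$ to $C$ between consecutive subsets $A_{\rm prev}\subsetneq A\subsetneq A_{\rm next}$ corresponds via Lemma~\ref{tubings-chains-bijection} to a tube $t$ containing $*$ together with all elements of $A$ (or, if $A$ is a new minimal-style subset recording a relocated element, a singleton outer tube); in either case compatibility with the existing tubes is immediate because $t$ is comparable to, or disjoint-and-nonadjacent from, each of them by construction, so $T\cup\{t\}$ is a tubing with one more tube. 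Removing the same element from every subset (operation (2)) corresponds to enlarging the tube grouped with $*$ to include that element's vertex, which likewise adds one nested tube. Thus $j$ iterations produce $j$ added tubes, giving $T\subset T'$ with $T'$ a $(k+j)$-tubing, matching the ranks. The main obstacle I anticipate is bookkeeping the boundary cases cleanly — in particular verifying that operation (1) versus operation (2) correctly distinguishes tubes added \emph{above} the $*$-level (refining the chain) from those added \emph{at or below} it (shrinking the minimal subset), and confirming that every move preserves compatibility and changes the tube count by exactly one, so that $k$-tubings track ranks of chains and the two operations together generate precisely the covering relations of tubing containment.
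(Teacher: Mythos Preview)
Your overall framework---reducing to the addition of a single tube and reading the effect off the spine---is the same as the paper's, but you have the two cases swapped. When the added tube $t$ is a singleton outer vertex $\{x_i\}$, compatibility forces every tube of $T$ containing $*$ to contain $x_i$ as well (otherwise $\{x_i\}$ and that tube would be adjacent), so $x_i$ was necessarily grouped with $*$ in the spine, i.e.\ $i$ lies in the \emph{minimal} subset of $C$ and hence in every subset. Passing to $T'$ moves $x_i$ to a singleton level below $*$, so $i$ disappears from \emph{all} subsets of the chain: this is operation~(2), not operation~(1). Conversely, when $t$ contains $*$---including the subcase where $t$ is strictly smaller than the current smallest $*$-tube---the effect on the chain is the insertion of one new subset (possibly a new minimal subset), which is operation~(1); it does not delete an element from every subset as you claim.

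The same swap contaminates your ``if'' direction: removing an element $i$ from every subset of $C$ corresponds to adjoining the singleton tube $\{x_i\}$, not to ``enlarging the tube grouped with $*$,'' while adding a non-maximal subset corresponds to adjoining a tube containing $*$. Once you correct this dictionary, your argument goes through and matches the paper's proof (which in fact only writes out the forward direction explicitly, leaving the converse implicit in the bijection).
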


\begin{proof}
Consider $T \subset T'$, \emph{i.e.}~$T'$ is obtained from $T$ by adding tubes. Suppose $T$ and $T'$ differ by adding a single tube, that is, $T = \{t_1, t_2, \ldots, t_k\}$ and $T' = \{t_1, t_2, \ldots, t_k, t'\}$. Let $S$ and $S'$ be their corresponding spines, and let $C$ and $C'$ be their corresponding chains. First note that by the nature of the star graph, a tube either is a singleton outer vertex, $x_i$, or contains the inner vertex,~$*$. Note that a singleton $x_i$ and the singleton $*$ cannot coexist as tubes in a tubing since they are not compatible (they are adjacent).

First consider the case that $t'$ was a singleton outer vertex, $x_i$. This means that in $S$, $i$ was grouped with $*$, while in $S'$, $\{i\}$ now appears below $*$. On the level of chains, this means that $i$ is removed from all of the subsets in $C$ to obtain $C'$.

Now consider the case that $t'$ was not a singleton outer vertex. Then it necessarily contains~$*$. In this case, $S'$ has a new level which was not present in $S$. In particular, this level contains $*$ (and possibly other labels). A new level containing $*$ corresponds to a non-maximal subset being added on the level of chains. In other words, $C'$ is obtained from $C$ by adding a non-maximal subset.

Now suppose $T$ and $T'$ differ by more than one tube, say $T$ is a $k$-tubing and $T'$ is a $(k+j)$-tubing for some $j$. Then $T'$ is obtained from $T$ by adding one tube at a time, $j$ times, and thus $C'$ is obtained from $C$ by $j$ iterations of (1) and/or (2) above.
\end{proof}

We now give a description of the dimension of a face in terms of its corresponding chain. This description involves \emph{missing ranks}, which we define below.

\begin{definition}
\label{def:missing}
Given a chain $C \in \mathcal{B}_m$, we say a rank $j$ is \emph{missing} from $C$ if there is no subset of size $j$ in $C$ and there is a subset of size greater than $j$ in $C$.
\end{definition}

\begin{corollary}\label{chain-ranks}
A face of $\mathcal{P}(m,m)$ is of dimension $k$ if and only if the corresponding chain has $k$ missing ranks.
\end{corollary}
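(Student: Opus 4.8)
The plan is to induct on the dimension of the face, using Lemma~\ref{chain-contain} to track how the corresponding chain changes as we pass to faces of one higher dimension, and to show that each such step creates exactly one new missing rank. The key bookkeeping observation, which I would isolate first, is purely combinatorial: if $C$ is a nonempty chain in $\mathcal{B}_m$ with $s$ distinct subsets whose largest member has size $a$, then the number of missing ranks of $C$ equals $a-(s-1)$. Indeed, the ranks $j$ that can possibly be missing are exactly those with $0\le j<a$ (a missing rank requires a strictly larger subset in $C$), there are $a$ such values of $j$, and precisely $s-1$ of them are occupied by the non-maximal members of $C$; the remaining $a-(s-1)$ ranks are missing. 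Note this correctly counts any ranks below the smallest member of $C$ as missing.

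Next I would set up the induction. Since $\mathcal{P}(m,m)=\mathrm{Assoc}(K_{1,m})$ is a simple polytope of dimension $m$ whose face poset is, by Definition~\ref{graph-assoc}, the poset of tubings ordered by reverse inclusion of their tube-sets, passing from a face to a face of one higher dimension corresponds to deleting a single tube. For the base case, a vertex corresponds to a maximal tubing: maximality forces the smallest tube containing $*$ to consist of $*$ together with exactly the singleton outer vertices, and forces each successive tube containing $*$ to add exactly one new outer vertex. Under the bijection of Lemma~\ref{tubings-chains-bijection}, this produces a saturated chain $\emptyset\subset\cdots\subset C_{\max}$ realizing all the consecutive ranks $0,1,\dots,|C_{\max}|$, which has no missing ranks, matching $\dim=0$.

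For the inductive step, I would invoke Lemma~\ref{chain-contain} with $j=1$: deleting one tube (raising the dimension by one) corresponds to reversing one of the two operations in that lemma, i.e.\ either removing a non-maximal subset from the chain, or adjoining one new element to every subset of the chain. Using the formula $a-(s-1)$ from the first paragraph, removing a non-maximal subset decreases $s$ by one while leaving $a$ fixed, and adjoining an element to every subset increases $a$ by one while leaving $s$ fixed; in both cases the number of missing ranks increases by exactly one. Hence each step up in dimension adds exactly one missing rank. Since $\mathcal{P}(m,m)$ is simple and hence graded, a face of dimension $k$ is reached from a vertex by $k$ such steps, so its chain has exactly $k$ missing ranks.

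The main obstacle I anticipate is not the induction itself but pinning down the conventions of the tubing-to-chain bijection precisely enough to make the base case and the two covering moves rigorous — in particular the fact that the largest member of the chain need not be all of $[m]$ (the absent elements being exactly the singleton outer vertices), together with the degenerate cases of the empty chain (the whole polytope) and the one-element chain $\{\emptyset\}$ (the origin vertex). I would handle these by noting that the chain always arises from a spine whose top node absorbs every outer vertex not appearing as a singleton, so that the elements absent from $C_{\max}$ are exactly the singleton tubes. This yields $\#\text{tubes}=(m-|C_{\max}|)+(s-1)$ and hence $\dim=m-\#\text{tubes}=|C_{\max}|-(s-1)$, which matches the missing-rank count of the first paragraph directly and also serves as an independent, non-inductive verification of the corollary.
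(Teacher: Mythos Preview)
Your argument is correct and follows essentially the same route as the paper's own proof: both pivot on Lemma~\ref{chain-contain} to see that a single tube corresponds to one of two chain moves, each of which changes the number of missing ranks by exactly one, and then match this against the dimension via the tubing count. The paper phrases it as ``adding a tube decreases missing ranks by one,'' while you run the induction in the opposite direction (deleting a tube increases missing ranks by one), but the content is identical. Your explicit bookkeeping identity $\#(\text{missing ranks})=a-(s-1)$ and the closing tube count $\#\text{tubes}=(m-|C_{\max}|)+(s-1)$ make the ``changes by one'' step rigorous and give a clean non-inductive check that the paper leaves implicit; this is a nice addition, not a different approach.
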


\begin{proof}
We know that adding a tube reduces the dimension of the corresponding face by one. Also, by Lemma~\ref{chain-contain}, we know that adding a tube corresponds to either adding a non-maximal subset or removing an element from every subset in the corresponding chain. In either case, this reduces the number of missing ranks in the chain by one. So, having $k$ missing ranks in the chain corresponds to having $m-k$ tubes, which by definition of the graph associahedron corresponds to a face being of dimension $k$.
\end{proof}

The theorem below follows directly from the above lemmas and corollary.
\begin{theorem} \label{chain-face-lattice}
The face lattice of $\mathcal{P}(m,m)$ is isomorphic to the lattice of chains in $\mathcal{B}_m$, where $C < C'$ if $C'$ can be obtained from $C$ by iterations of (1) and/or (2) from Lemma~\ref{chain-contain}. A face of $\mathcal{P}(m,m)$ is of dimension $k$ if and only if the corresponding chain has $k$ missing ranks.
\end{theorem}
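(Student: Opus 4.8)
The plan is to assemble Theorem~\ref{chain-face-lattice} as a direct corollary of the three preceding results, so the bulk of the work is already done and what remains is to glue the pieces together into the claimed poset isomorphism. The starting point is the theorem of Manneville--Pilaud stating that $\mathcal{P}(m,m)$ is a realization of $\text{Assoc}(K_{1,m})$, so by Definition~\ref{graph-assoc} the face poset of $\mathcal{P}(m,m)$ is isomorphic to the poset of tubings of $K_{1,m}$, ordered so that $T < T'$ when $T'$ is obtained from $T$ by adding tubes. Thus it suffices to transport this poset structure across the bijection of Lemma~\ref{tubings-chains-bijection} to chains in $\mathcal{B}_m$, and then separately verify the dimension statement.

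First I would invoke Lemma~\ref{tubings-chains-bijection} to get a set bijection between tubings of $K_{1,m}$ and chains in $\mathcal{B}_m$. To upgrade this to a poset isomorphism, I would apply Lemma~\ref{chain-contain}, which says precisely that $T \subset T'$ (as tubings) if and only if the corresponding chain $C'$ is obtained from $C$ by iterations of operations (1) and (2). This is exactly the order relation $C < C'$ named in the statement, so the bijection of Lemma~\ref{tubings-chains-bijection} automatically becomes an order isomorphism between the tubing poset and the poset of chains under this relation. Composing with the face-poset isomorphism from the Manneville--Pilaud realization gives the desired isomorphism between the face lattice of $\mathcal{P}(m,m)$ and the lattice of chains in $\mathcal{B}_m$. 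I would note here that because the face poset of a polytope is a lattice and the map is an order isomorphism, the chains under this relation inherit a lattice structure, justifying the word ``lattice'' in the statement.

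Finally, the dimension claim is exactly the content of Corollary~\ref{chain-ranks}, which I would simply cite: a face of $\mathcal{P}(m,m)$ has dimension $k$ if and only if its corresponding chain has $k$ missing ranks. Since that corollary was proved by tracking how each added tube simultaneously drops the face dimension by one (via the graph-associahedron dimension convention that a dimension-$k$ face corresponds to an $(m-k)$-tubing) and reduces the number of missing ranks by one, the two counts agree and the dimension statement follows.

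The main thing to be careful about is not a genuine obstacle but a matter of bookkeeping: I must make sure the order conventions line up consistently. The face-poset convention in Definition~\ref{graph-assoc} has smaller faces corresponding to \emph{larger} tubings (more tubes), and Lemma~\ref{chain-contain} is phrased in the matching direction ($T \subset T'$ with $T'$ having more tubes corresponds to $C < C'$ gaining structure), so the composite respects containment of faces as intended. The only genuine verification, already supplied by the cited lemmas, is that operations (1) and (2) are exactly the covering moves in both the tubing poset and the chain poset; given that, the theorem follows with no further computation.
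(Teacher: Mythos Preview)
Your proposal is correct and matches the paper's approach exactly: the paper states that the theorem ``follows directly from the above lemmas and corollary,'' and you have spelled out precisely that assembly from the Manneville--Pilaud realization, Lemma~\ref{tubings-chains-bijection}, Lemma~\ref{chain-contain}, and Corollary~\ref{chain-ranks}. If anything, your version is more careful than the paper's one-line justification, since you explicitly check that the order conventions align and note why the resulting poset inherits a lattice structure.
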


As chains in the Boolean lattice are generally more familiar objects than tubings of graphs, presenting results in terms of these chains is conceptually helpful. In fact, because of the description of the faces of $\mathcal{P}(m,m)$ in terms of chains, we are able to form the following conjecture for $\mathcal{P}(m,n)$.

\begin{conjecture}\label{mn-chain-face}
Faces of $\mathcal{P}(m,n)$ are in bijection with chains in $\mathcal{B}_m$ whose difference between largest and smallest nonempty subsets is at most $n-1$. A face of $\mathcal{P}(m,n)$ is of dimension $k$ if and only if the corresponding chain has $k$ missing ranks
\end{conjecture}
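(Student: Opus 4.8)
The plan is to prove Conjecture~\ref{mn-chain-face} by realizing $\mathcal{P}(m,n)$ as a deformation of the stellohedron $\mathcal{P}(m,m)$ and determining exactly which faces persist. I would first dispose of the range $n\ge m$: here any two nonempty subsets of $[m]$ differ in size by at most $m-1\le n-1$, so the spread condition is vacuous and the claim should reduce to Theorem~\ref{chain-face-lattice}. To make this rigorous I would check that the inequality description of Theorem~\ref{thm:permuto_ineq} has, for $n\ge m$, the same irredundancy pattern as for $n=m$ (the facet count in Theorem~\ref{permutofacets} and the vertex count in Proposition~\ref{ppermutoverts} already coincide in this range), giving a combinatorial isomorphism $\mathcal{P}(m,n)\cong\mathcal{P}(m,m)$. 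All the content therefore lies in the case $n<m$.

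The key structural input I would use for $n<m$ is that $\mathcal{P}(m,n)$ is a polymatroid. Setting $\rho(S)=\binom{n+1}{2}-\binom{n-|S|+1}{2}$, one computes for $|S|=|S'|+1\le n+1$ that $\rho(S)-\rho(S')=n-|S|+1$, so $\rho$ depends only on $|S|$, is monotone, and is concave in $|S|$ (the increments $n-k+1$ are nonincreasing and vanish for $k\ge n+1$); hence $\rho$ is a submodular rank function and, by Theorem~\ref{thm:permuto_ineq}, $\mathcal{P}(m,n)=\{u\ge 0:\sum_{i\in S}u_i\le\rho(S)\ \text{for all }S\}$ is the independence polytope of a symmetric polymatroid. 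The same formula with $n=m$ gives the stellohedron, and since $\rho$ strictly increases only on sizes $\le n$ and is constant ($=\binom{n+1}{2}$) on sizes $\ge n$, we have $\mathcal{P}(m,n)\subseteq\mathcal{P}(m,m)$ with $\mathcal{P}(m,n)$ a generalized permutohedron whose normal fan coarsens the braid fan. This flat region of $\rho$ is the geometric origin of the spread restriction: directions that try to separate blocks of coordinates lying beyond the $n$th largest become indistinguishable.

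I would then build the bijection directly from the polymatroid (greedy) description of faces, extending Lemma~\ref{tubings-chains-bijection}. Maximizing a linear functional $c\cdot u$ by the polymatroid greedy algorithm yields a face recorded by its flag of tight sets together with the zero-support of $c$; in the case $n=m$ this data is precisely the tubing/spine chain of Lemma~\ref{tubings-chains-bijection}, and I would show the same encoding assigns to each face of $\mathcal{P}(m,n)$ a well-defined chain in $\mathcal{B}_m$ (the same combinatorial operations reappearing as (1) and (2) of Lemma~\ref{chain-contain}). For the dimension statement I would argue exactly as in Corollary~\ref{chain-ranks}: the number of free coordinate directions on a greedy face equals the number of missing ranks of its chain, each tightening filling in one rank and lowering the dimension by one, so $\dim F$ equals the number of missing ranks of the associated chain.

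The main obstacle is the persistence/collapse criterion: proving that the chains arising from faces of $\mathcal{P}(m,n)$ are exactly those whose nonempty subsets span at most $n-1$ ranks, and that every surviving face keeps its missing-rank dimension. Small cases make clear that this is delicate and that the correct invariant really is the spread $|S_p|-|S_1|$ of the extreme nonempty subsets rather than the top size $|S_p|$: for instance $\mathcal{P}(2,1)$ is the triangle realizing exactly the seven bounded-spread chains of $\mathcal{B}_2$ and $\mathcal{P}(3,1)$ is the tetrahedron realizing exactly the fifteen such chains of $\mathcal{B}_3$, while in each case the chains of spread $\ge n$ (e.g.\ $\{\emptyset,\{1\},\{1,2\}\}$ for $n=1$) are precisely the faces that degenerate. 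Pinning down why the flat region of $\rho$ annihilates exactly the spread-$\ge n$ cones of the stellohedron's normal fan while preserving the dimensions of all others is the crux; I expect the cleanest route is an induction on $m-n$, passing from $\mathcal{P}(m,n+1)$ to $\mathcal{P}(m,n)$ and showing each step deletes precisely the chains of spread exactly $n$ and leaves the remaining face structure, and its dimension count via missing ranks, intact.
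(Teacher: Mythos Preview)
The paper does not prove this statement: it is explicitly stated as Conjecture~\ref{mn-chain-face}, and the only evidence offered is the remark that it has been verified computationally for $m,n\le 4$. There is therefore no proof in the paper to compare your proposal against; you are attempting to settle something the authors leave open.

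As a plan of attack your outline is reasonable, and the polymatroid viewpoint is the natural one. Your observation that $\rho(S)=\binom{n+1}{2}-\binom{n-|S|+1}{2}$ is submodular and becomes constant for $|S|\ge n$ is correct and is indeed the geometric source of the spread restriction. For the range $n\ge m$ your reduction is essentially right and can be sharpened: since the defining inequalities of Theorem~\ref{thm:permuto_ineq} differ only in their right-hand sides and none becomes redundant when $n\ge m$ (Theorem~\ref{permutofacets}), $\mathcal{P}(m,n)$ and $\mathcal{P}(m,m)$ are generalized permutohedra with identical normal fans, so their face lattices coincide and Theorem~\ref{chain-face-lattice} applies verbatim.

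The genuine gap is the one you yourself flag: for $n<m$ you have not shown that the cones of the stellohedron's normal fan that collapse under the degeneration are \emph{exactly} those labeled by chains of spread $\ge n$, nor that every surviving face retains its missing-rank dimension. Your proposed induction on $m-n$ is plausible but all the work is hidden there: one must identify precisely which walls of the fan are erased when the rank function flattens at one further level, and check that this removes the spread-$n$ chains and nothing else. Until that step is written out, this remains a sketch rather than a proof, which is consistent with the paper's own treatment of the statement as a conjecture.
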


\begin{remark}
This conjecture has been tested and verified for $m,n \leq 4$ using SageMath.
\end{remark}

\subsection{Projection from partial alternating sign matrix polytopes}
\label{ppermutoprojection}
In this subsection, we show that the partial permutohedron is a projection of both  $\pperm(m,n)$ (in Theorem~\ref{pperm-proj}) and $\pasm(m,n)$ (in Theorem~\ref{pasm-proj}). Recall $\phi_z$ and $\mathcal{P}_z(m,n)$ from Definition~\ref{Pz}.

\begin{theorem}\label{pperm-proj}
The projection of $\pperm(m,n)$ by $\phi_z$ is the polytope $\mathcal{P}_z(m,n)$. That is, \[\phi_z(\pperm(m,n))=\mathcal{P}_z(m,n).\] 
\end{theorem}

\begin{proof}
First we need to show $\mathcal{P}_z(m,n) \subseteq \phi_{z}(\pperm(m,n))$. Suppose $v\in \mathcal{P}_z(m,n)$. We wish to show $v\in \phi_{z}(\pperm(m,n))$. By definition, $v=\sum\lambda_i w_i$ for $\lambda_i \geq 0$ with $\sum\lambda_i =1$,
 where the sum is over all length $m$ words $w_i$ whose entries are in $\left\{0,z_1,z_2,\ldots,z_n\right\}$ and whose nonzero entries are distinct. But $w_i = X_i z$ where $X_i\in P_{m,n}$. So $v=\sum\lambda_i X_i z=(\sum\lambda_i X_i)z$, which proves our claim.

Then we need to show that $\phi_{z}(\pperm(m,n)) \subseteq \mathcal{P}_z(m,n)$. 
Define $\hat{z}$ as $z$ with $m-n$ zeros appended if $m\geq n$ and as the largest $n-m$ components of $z$ if $m<n$. 
Let $X = \left\{x_{ij}\right\}$ be an $m \times n$ partial permutation matrix. Then, by Proposition~\ref{maj_prop}, the proof will be completed by showing $Xz \prec_w \hat{z}$ since the convex hull described will then be $\mathcal{P}_z(m,n)$. So, by Definition~\ref{maj}, we need to show:
\[
\displaystyle \sum_{i=1}^k \left(Xz\right)_{[i]} \leq \sum_{i=1}^k \hat{z}_{[i]}, \mbox{ for } 1 \leq k \leq m.
\]
This is true, since each component of the vector $Xz$ is either $0$ or $z_j$ for some $1\leq j\leq n$, because each column of $X$ has at most one nonzero entry.
\end{proof}

\begin{theorem}\label{pasm-proj}
Let $z$ be a strictly decreasing vector in $\mathbb{R}^n$. Then
$\phi_z(\pasm(m,n))=\mathcal{P}_z(m,n)$. 
\end{theorem}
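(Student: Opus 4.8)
The plan is to prove the two inclusions $\phi_z(\pasm(m,n)) \subseteq \mathcal{P}_z(n,m)$ and $\mathcal{P}_z(n,m) \subseteq \phi_z(\pasm(m,n))$ separately, mirroring the structure of the proof of Theorem~\ref{pperm-proj} but replacing the combinatorial argument about partial permutation matrices with one that accounts for the $-1$ entries permitted in partial alternating sign matrices. The reverse containment $\mathcal{P}_z(n,m) \subseteq \phi_z(\pasm(m,n))$ is the easy direction: since every partial permutation matrix is a partial alternating sign matrix (Remark~\ref{remark:mx_contain}), we have $\pperm(m,n) \subseteq \pasm(m,n)$, hence $\mathcal{P}_z(n,m) = \phi_z(\pperm(m,n)) \subseteq \phi_z(\pasm(m,n))$ by Theorem~\ref{pperm-proj} and linearity of $\phi_z$.

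The substantive direction is $\phi_z(\pasm(m,n)) \subseteq \mathcal{P}_z(n,m)$. Here I would take an arbitrary $M \in \pasm_{m,n}$ and show $zM \in \mathcal{P}_z(n,m)$; since $\phi_z$ is linear and $\pasm(m,n)$ is the convex hull of the $\pasm_{m,n}$ vertices (Proposition~\ref{prop:vertpasm}), this suffices. Following the template of Theorem~\ref{pperm-proj}, I would define $\hat{z}$ as before (here $z \in \mathbb{R}^m$, so $\hat{z}$ is either $z$ with $n-m$ zeros appended when $n \geq m$, or the largest $m-n$ components of $z$ when $n < m$), and use Proposition~\ref{maj_prop} to reduce the claim to the weak majorization $zM \prec_w \hat{z}$. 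By Definition~\ref{maj}, I must verify $\sum_{j=1}^k (zM)_{[j]} \leq \sum_{j=1}^k \hat{z}_{[j]}$ for all $1 \leq k \leq n$, where the entries are sorted in decreasing order.

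The main obstacle, and the reason this is not immediate from the $\pperm$ case, is that the $j$th entry $(zM)_j = \sum_{i=1}^m z_i M_{ij}$ of a partial alternating sign matrix column need not equal a single $z_i$ or $0$; because of the $-1$ entries, it is a signed combination of several $z_i$'s. The key point to extract is that the column partial sums $c_{ij} = \sum_{i'=1}^{i} M_{i'j}$ lie in $\{0,1\}$ (this is exactly condition~(\ref{eq:pasm1}) of Definition~\ref{pasm}), so summation by parts (Abel summation) gives $(zM)_j = \sum_{i=1}^m z_i(c_{ij} - c_{i-1,j}) = \sum_{i=1}^{m-1}(z_i - z_{i+1})c_{ij} + z_m c_{mj}$, a nonnegative combination (using that $z$ is strictly decreasing, so $z_i - z_{i+1} > 0$) of the $0/1$ quantities $c_{ij}$. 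This is precisely where the strict-decrease hypothesis on $z$ is needed. From this representation I would bound each column entry $(zM)_j$ above by $z_1$ and, more importantly, argue that the sorted vector $((zM)_{[1]}, \ldots, (zM)_{[n]})$ is dominated in the weak-majorization order by $\hat{z}$: each column sum is a convex-type average of the $z_i$ bounded by the top available $z$-values, and the partial sums of the $k$ largest column entries cannot exceed the sum of the $k$ largest entries of $\hat{z}$ since distinct columns compete for the same "budget" of large $z_i$ weights governed by the at-most-one-unit column sums. I would make this precise by showing that the matrix $C = (c_{ij})$ of column partial sums has all entries in $\{0,1\}$ with column sums at most $1$, so $zM = (z_1 - z_2, z_2 - z_3, \ldots)$ applied to cumulative data yields a point already known to be weakly majorized by $\hat z$, completing the reduction and hence the proof.
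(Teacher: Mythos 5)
Your overall route is the same as the paper's: the easy inclusion via Theorem~\ref{pperm-proj} and $\pperm(m,n)\subseteq\pasm(m,n)$, then the reduction of the hard inclusion to the weak majorization $zM\prec_w\hat z$ via Proposition~\ref{maj_prop}, with Abel summation against the column partial sums $c_{ij}$ exploiting that $z$ is decreasing. But the decisive step is left as a hand-wave, and the hand-wave omits the one ingredient without which the statement is false: the row conditions~(\ref{eq:pasm2}). Everything you make explicit uses only the column condition~(\ref{eq:pasm1}) ($c_{ij}\in\{0,1\}$), and your per-column identity $(zM)_j=\sum_{i=1}^{m-1}(z_i-z_{i+1})c_{ij}+z_m c_{mj}$ only yields $(zM)_j\le z_1$ for each column, hence $\sum_{j=1}^k(zM)_{[j]}\le k z_1$ rather than the needed $z_1+\cdots+z_k$. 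That the columns ``compete for a budget'' is the right intuition, but the competition comes from the \emph{row} sums, not from ``the at-most-one-unit column sums'' as you write: a matrix satisfying~(\ref{eq:pasm1}) alone can have every column identical (e.g.\ $m=1$, $n=2$, $M=(1\ 1)$, where $zM=(z_1,z_1)\not\prec_w(z_1,0)=\hat z$ when $z_1>0$), so no argument using only~(\ref{eq:pasm1}) can close the gap.

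The paper's fix, which your sketch would need to incorporate, is to apply Abel summation to the aggregated quantities $S_\ell:=\sum_{i=1}^{\ell}\sum_{j\in J}x_{ij}=\sum_{j\in J}c_{\ell j}$ for an arbitrary $J\subseteq[n]$, and to prove the two-sided estimate $S_\ell\le\min(\ell,|J|)$: the bound $S_\ell\le|J|$ comes from $c_{\ell j}\le 1$, while the bound $S_\ell\le\ell$ comes from $c_{\ell j}\ge 0$ together with $\sum_{j=1}^{n}x_{ij}\in\{0,1\}$, i.e.\ from~(\ref{eq:pasm2}). Feeding $\min(\ell,|J|)$ into the telescoping sum $\sum_{\ell=1}^{m-1}(z_\ell-z_{\ell+1})S_\ell+z_m S_m$ is what produces $\sum_{\ell=1}^{\min(m,|J|)}z_\ell=\sum_{\ell=1}^{|J|}\hat z_\ell$, and taking $J$ to be the indices of the $k$ largest entries of $zM$ gives the weak majorization. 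Until your ``budget'' sentence is replaced by this (or an equivalent) use of the row constraints, the proposal has a genuine gap at exactly the point where partial alternating sign matrices differ from arbitrary matrices with $0/1$ column partial sums.
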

\begin{proof}
Let $z$ be a strictly decreasing vector in $\mathbb{R}^n$. It follows from Theorem~\ref{pperm-proj} and \\ $\pperm(m,n) \subseteq \pasm(m,n)$ that $\mathcal{P}_z(m,n) \subseteq \phi_{z}(\pasm(m,n))$. Thus it only remains to be shown that $\phi_{z}(\pasm(m,n)) \subseteq \mathcal{P}_z(m,n)$.

As in the previous theorem, define $\hat{z}$ as $z$ with $m-n$ zeros appended if $m\geq n$ and as the largest $n-m$ components of $z$ if $m<n$. 
Let $X = \left\{x_{ij}\right\}$ be an $m \times n$ partial alternating sign matrix. Then, by Proposition~\ref{maj_prop}, the proof will be completed by showing $Xz \prec_w \hat{z}$ since the convex hull described will then be $\mathcal{P}_z(m,n)$. So, by Definition~$\ref{maj}$, we need to show:
\begin{equation}
\displaystyle \sum_{i=1}^k \left(Xz\right)_{[i]} \leq \sum_{i=1}^k \hat{z}_i, \mbox{ for } 1 \leq k \leq m.
\end{equation}
To prove this, we will show that $\sum_{i \in I}(Xz)_i \leq \sum_{i=1}^{|I|} \hat{z}_i$ given any $I \subseteq \{1,\ldots,m\}$, so that, in particular, $\sum_{i=1}^{|I|}(Xz)_{[i]} \leq \sum_{i=1}^{|I|} \hat{z}_i$.

We will need to verify the following:
\begin{equation}\label{toprove1}
\sum_{j=1}^{\ell} \sum_{i \in I} x_{ij} \leq \min(\ell,|I|), \mbox{ for } 1 \leq \ell \leq n
\end{equation}

To prove this, note that
\[\sum_{j=1}^{\ell} \sum_{i \in I} x_{ij} = \sum_{i \in I} \sum_{j=1}^{\ell} x_{ij} \leq |I|\]
since $\displaystyle\sum_{j=1}^{\ell} x_{ij} \leq 1$. But since $\displaystyle\sum_{j=1}^{\ell} x_{ij} \geq 0$ and $\displaystyle\sum_{i=1}^m x_{ij}\in\{0,1\}$, we also have that:
\[\sum_{i \in I}\sum_{j=1}^{\ell} x_{ij} \leq \sum_{i=1}^m\sum_{j=1}^{\ell} x_{ij} = \sum_{j=1}^{\ell}\sum_{i=1}^m x_{ij} \leq {\ell},
\]
proving (\ref{toprove1}).

Now we show $\sum_{i \in I}(Xz)_i \leq \sum_{i=1}^{|I|} \hat{z}_i$.
\begin{align*}
\sum_{i \in I}(Xz)_i & = \sum_{i \in I}\sum_{j=1}^n x_{ij}z_{j} = \sum_{j=1}^nz_j\sum_{i \in I} x_{ij} \qquad \qquad  \qquad \quad \hspace{1ex} \quad \quad \mbox{by definition}\\
& = \sum_{{\ell}=1}^{n-1}(z_{\ell} - z_{{\ell}+1})\sum_{j=1}^{\ell}\sum_{i \in I} x_{ij} + z_n\sum_{j=1}^n \sum_{i \in I} x_{ij} \qquad \qquad \mbox{by algebraic manipulation}\\
 & \leq \sum_{{\ell}=1}^{n-1}(z_{\ell} - z_{{\ell}+1}) \sum_{j=1}^{\ell} \sum_{i \in I} x_{ij} + z_n  \min(n,|I|) \qquad \qquad \mbox{by } (\ref{toprove1}) \\
 & = \sum_{{\ell}=1}^{ \min(n,|I|)-1}(z_{\ell} - z_{{\ell}+1})\sum_{j=1}^{\ell} \sum_{i \in I} x_{ij} + \sum_{{\ell} =  \min(n,|I|)}^{n-1}(z_{\ell} - z_{{\ell}+1})\sum_{j=1}^{\ell} \sum_{i \in I} x_{ij} + z_n  \min(n,|I|) \\
 & \leq \sum_{{\ell}=1}^{ \min(n,|I|)-1}(z_{\ell} - z_{{\ell}+1}){\ell} + \sum_{{\ell} =  \min(n,|I|)}^{n-1}(z_{\ell} - z_{{\ell}+1}) |I| +z_n  \min(n,|I|)
 \end{align*}
 by (\ref{toprove1}) and since  $z_{\ell} \geq z_{{\ell}+1}$. Furthermore, this equals
 \begin{align*}
 &  \sum_{{\ell}=1}^{ \min(n,|I|)} z_{\ell} \qquad\mbox{by telescoping sums,}\\
  & = \sum_{{\ell}=1}^{|I|} \hat{z}_{\ell}, \qquad\mbox{since the last } m-n \mbox{ entries of } \hat{z} \mbox{ are zero in the case } n<m.
\end{align*}

Thus $Xz \prec_w \hat{z}$ and so $Xz$ is contained in the convex hull of the partial permutations of $z$. Therefore $\phi_z(\pasm(m,n)) = \mathcal{P}_z(m,n)$.
\end{proof}

\subsection{Volume}
\label{ppermutovolume}

Regarding the volume of $\mathcal{P}(m,n)$, we have the following theorem for $m=2$ and conjecture for $n=2$. We also give normalized volume computations for $m,n \leq 7$ in Figure~\ref{ppermuto_volume}.

\begin{theorem}
\label{ppermvol1}
The polytope $\mathcal{P}(2,n)$ has normalized volume equal to $2n^2-1$.
\end{theorem}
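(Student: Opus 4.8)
The plan is to specialize the inequality description of Theorem~\ref{thm:permuto_ineq} to the case $m=2$, recognize the resulting region as a polygon in $\mathbb{R}^2$, and compute its area; the normalized volume is then $2!$ times the Euclidean area, since $\mathcal{P}(2,n)$ is a full-dimensional lattice polygon.

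First I would write out the facet inequalities explicitly for $m=2$. The nonnegativity constraints (\ref{eq:permuto2}) give $u_1,u_2\geq 0$. For the singleton subsets $S=\{1\}$ and $S=\{2\}$ (so $k=1$), inequality (\ref{eq:permuto1}) reads $u_i \leq \binom{n+1}{2}-\binom{n}{2}=n$. For $S=\{1,2\}$ (so $k=2$), it reads $u_1+u_2 \leq \binom{n+1}{2}-\binom{n-1}{2}=2n-1$. Both binomial evaluations are one-line computations, namely $\binom{n+1}{2}-\binom{n}{2}=\tfrac{n(n+1)-n(n-1)}{2}=n$ and $\binom{n+1}{2}-\binom{n-1}{2}=\tfrac{n(n+1)-(n-1)(n-2)}{2}=\tfrac{4n-2}{2}=2n-1$.

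Next I would describe the geometry. The constraints $0\leq u_1,u_2\leq n$ carve out the square $[0,n]^2$ of area $n^2$, and the remaining constraint $u_1+u_2\leq 2n-1$ slices off the corner near $(n,n)$. Since the line $u_1+u_2=2n-1$ meets the square at $(n,n-1)$ and $(n-1,n)$, it removes the right-isosceles triangle with vertices $(n,n)$, $(n,n-1)$, $(n-1,n)$, whose legs have length $1$ and whose area is $\tfrac12$. Hence the Euclidean area of $\mathcal{P}(2,n)$ is $n^2-\tfrac12$. Multiplying by $2!=2$ to normalize gives $2\bigl(n^2-\tfrac12\bigr)=2n^2-1$, as claimed.

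There is no genuine obstacle here: the substantive content is the inequality description already established in Theorem~\ref{thm:permuto_ineq}, and everything afterward is elementary plane geometry. The only points warranting care are confirming the two binomial simplifications and verifying that the cut-off triangle is entirely inside the square (it is, since $n-1\geq 0$), so that the area formula $n^2-\tfrac12$ holds uniformly; a quick check at $n=1$, where the pentagon degenerates to the triangle of area $\tfrac12$ and normalized volume $1$, confirms the boundary case.
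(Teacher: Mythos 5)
Your proof is correct and follows essentially the same route as the paper: both identify $\mathcal{P}(2,n)$ as the $n\times n$ square with the corner at $(n,n)$ cut off along the segment from $(n,n-1)$ to $(n-1,n)$, compute the area $n^2-\tfrac12$, and multiply by $2!$. The only cosmetic difference is that you derive the picture from the inequality description of Theorem~\ref{thm:permuto_ineq} while the paper starts from the list of extreme points, which amounts to the same pentagon.
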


\begin{proof}
$\mathcal{P}(2,n)$ is a 2-dimensional polytope whose extreme points consist of exactly $(0,0)$, $(n,0)$, $(0,n)$, $(n,n-1)$, and $(n-1,n)$. This forms an $n \times n$ square with one corner ``cut off'' by the line segment connecting $(n,n-1)$ to $(n-1,n)$. We can explicitly calculate the area of this region to be $n^2 - \frac{1}{2}$. To obtain the normalized volume we multiply by $\text{dim}\left(\mathcal{P}(2,n)\right)! = 2!$ giving us $2n^2-1$. 
\end{proof}

Refer to Figure~\ref{ppermuto_plot} for the case $m=n=2$.

\begin{figure}[hbtp]
\centering
\begin{tabular}{|c|c|c|c|c|c|c|c|}
\hline
\diaghead(1,-1)%
   {\theadfont nnn}%
   {$m$}{$n$} & 1 & 2    & 3      & 4       & 5        & 6         & 7        \\ \hline
    1     & 1 & 2    & 3      & 4       & 5        & 6         & 7        \\ \hline
    2     & 1 & 7    & 17     & 31      & 49       & 71        & 97       \\ \hline
    3     & 1 & 24   & 129    & 342     & 699      & 1236      & 1989     \\ \hline
    4     & 1 & 77   & 954    & 4554    & 12666    & 27882     & 53370    \\ \hline
    5     & 1 & 238  & 6521   & 59040   & 262410   & 751380    & 1741950  \\ \hline
    6     & 1 & 723  & 42207  & 707669  & 5295150  & 22406130  & 65379150 \\ \hline
    7     & 1 & 2180 & 264501 & 7975502 & 99170254 & 651354480 & 2657217150 \\ \hline
\end{tabular}
\caption{Some normalized volume computations for $\mathcal{P}(m,n)$}
\label{ppermuto_volume}
\end{figure}

\begin{figure}[hbtp]
\includegraphics[scale=0.5]{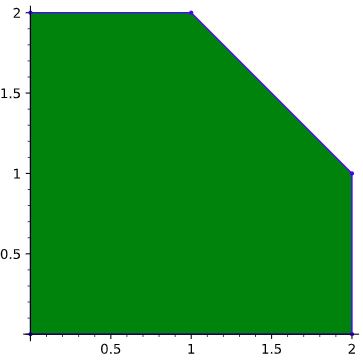} \hspace{0.5cm} \includegraphics[scale=0.4]{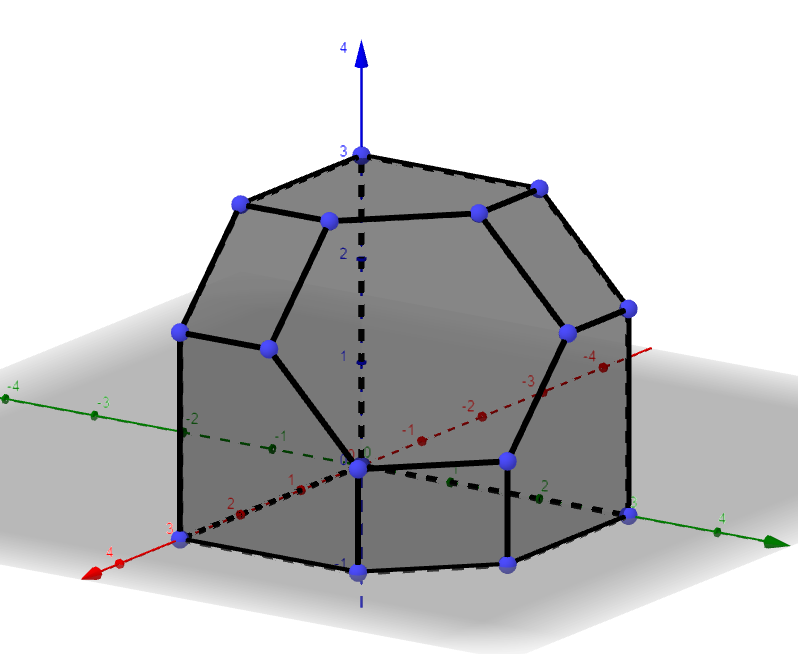}
\caption{Plots of $\mathcal{P}(2,2)$ (left) and $\mathcal{P}(3,3)$ (right).}
\label{ppermuto_plot}
\end{figure}

\begin{conjecture}
\label{ppermvol2}
The polytope $\mathcal{P}(m,2)$ has normalized volume equal to $3^m-m$.
\end{conjecture}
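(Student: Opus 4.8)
The plan is to reduce $\mathcal{P}(m,2)$ to an explicitly truncated cube and then compute its Euclidean volume by a standard inclusion--exclusion. First I would specialize the inequality description of Theorem~\ref{thm:permuto_ineq} to $n=2$. Here $\binom{n+1}{2}=3$, so for $S$ of size $k=|S|$ the constraints are $\sum_{i\in S}u_i \le 3-\binom{3-k}{2}$ together with $u_i\ge 0$. Since $\binom{3-k}{2}=1$ when $k=1$ and $\binom{3-k}{2}=0$ for all $k\ge 2$, these read $u_i\le 2$ for singletons and $\sum_{i\in S}u_i\le 3$ for every $S$ with $|S|\ge 2$. Because all coordinates are nonnegative, the strongest of the latter is the full sum $\sum_{i=1}^m u_i\le 3$, which implies all the others. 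Hence
\[
\mathcal{P}(m,2)=\{u\in\mathbb{R}^m : 0\le u_i\le 2 \text{ for all } i,\ \textstyle\sum_{i=1}^m u_i\le 3\},
\]
the cube $[0,2]^m$ truncated by the half-space $\sum_i u_i\le 3$. This matches the facet count $2m+1$ obtained from Theorem~\ref{permutofacets} at $n=2$, namely $m$ lower facets, $m$ upper facets, and one truncating facet, confirming that the description is irredundant and that the polytope is full-dimensional in $\mathbb{R}^m$.

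Next I would compute the Euclidean volume of this region as a box-truncated simplex. Writing $W=\operatorname{vol}\{u : u_i\ge 0,\ u_i\le 2,\ \sum_i u_i\le 3\}$ and applying inclusion--exclusion over the subsets $T\subseteq[m]$ of coordinates forced to exceed $2$ (shifting $u_i\mapsto u_i-2$ for $i\in T$ turns each intersection into a scaled standard simplex) gives
\[
W=\frac{1}{m!}\sum_{j=0}^m(-1)^j\binom{m}{j}\,(3-2j)_+^m,
\]
where $x_+=\max(x,0)$. Since $3-2j>0$ only for $j=0$ (giving $3$) and $j=1$ (giving $1$), all terms with $j\ge 2$ vanish and $W=\dfrac{3^m-m}{m!}$.

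Finally, since $\mathcal{P}(m,2)$ is full-dimensional in $\mathbb{R}^m$, its normalized volume is $m!$ times its Euclidean volume, giving $m!\cdot\frac{3^m-m}{m!}=3^m-m$, as conjectured; this also agrees with the tabulated values in Figure~\ref{ppermuto_volume}. I expect the only genuinely delicate point to be the inclusion--exclusion volume formula for the box-truncated simplex: one must justify that each $j$-fold overlap is, after the shift, a simplex of volume $(3-2j)_+^m/m!$, and that overlaps with $3-2j\le 0$ contribute nothing. Once that identity is in hand, both the algebraic collapse to $3^m-m$ and the reduction of the inequality description are routine. As an independent cross-check, the formula for $W$ could be verified by induction on $m$, slicing off the top layer $u_m\in[0,2]$ and integrating the $(m-1)$-dimensional truncated-cube cross-sections.
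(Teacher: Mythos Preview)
Your argument is correct, and in fact it proves the statement, whereas the paper does not: in the paper this is explicitly stated as a \emph{conjecture} (Conjecture~\ref{ppermvol2}), with the only supporting evidence being SageMath verification for $m\le 50$. So there is no ``paper's own proof'' to compare against.

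Your route is clean and elementary. The reduction of the inequality description to
\[
\mathcal{P}(m,2)=\{u\in\mathbb{R}^m : 0\le u_i\le 2,\ \textstyle\sum_i u_i\le 3\}
\]
is correct: for $|S|\ge 2$ the right-hand side in Theorem~\ref{thm:permuto_ineq} is always $3$, and nonnegativity makes the full-sum constraint the strongest among these; the resulting facet count $2m+1$ matches Theorem~\ref{permutofacets}. The inclusion--exclusion over coordinates exceeding $2$ is the standard derivation of the volume of a simplex intersected with a box, and the collapse to just the $j=0$ and $j=1$ terms is immediate since $3-2j\le -1$ for $j\ge 2$. The only point worth making fully explicit in a polished write-up is that the shifted region for $|T|=j$ is exactly the dilate $(3-2j)\Delta^m$ of the standard simplex (when $3-2j>0$) and is empty otherwise, so that each term contributes $(3-2j)_+^m/m!$; you have already flagged this as the one step requiring care. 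With that, your argument settles the conjecture.
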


Using SageMath, we have confirmed this conjecture for $m \leq 50$. 

\begin{remark}
We have used SageMath to compute the Ehrhart polynomials for $\mathcal{P}(m,n)$ for $m,n \leq 7$ and note that in all of these cases their coefficients are positive.
\end{remark}

\section*{Acknowledgments}
The authors thank anonymous referees for helpful comments and for the proof of Theorem~\ref{conj:volPPerm}. They thank the developers of \verb|SageMath|~\cite{sage} software, especially the code related to polytopes, which was helpful in our research, and the developers of \verb|CoCalc|~\cite{SMC} for making \verb|SageMath| more accessible. They also thank the OEIS Foundation~\cite{oeis1} and the contributors to the OEIS database for creating and maintaining this resource. JS was supported by a grant from the Simons Foundation/SFARI (527204, JS).

\bibliographystyle{plain}
\bibliography{biblio}
\end{document}